\newif\ifproc
	\setlist[enumerate,1]{label=(\alph*), font={\bfseries}}
	\setlist[enumerate,1]{label=\arabic*.}
\title{A Conjecture on Rainbow Hamiltonian Cycle Decomposition} 
\titlerunning{}
\author{Ramin Javadi}{Isfahan University of Technology, Isfahan, Iran}{rjavadi@iut.ac.ir}{https://orcid.org/0000-0003-4401-2110}{}
\author{Meysam Miralaei}{Institute for Research in Fundamental Sciences (IPM), Tehran, Iran}{m.miralaei@ipm.ir}{}{This research was supported by a grant from IPM.}
\authorrunning{R.\ Javadi and M. Miralaei}
\keywords{Hamiltonian Cycle Decomposition, Evans Conjecture, Linear Arboricity, Size Ramsey Number}
\newcommand{\hcd}{Hamiltonian cycle decomposition}
\DeclareMathOperator{\la}{la}
\begin{document}

\maketitle
\begin{abstract}
Wu in 1999 conjectured that if $H$ is a subgraph of the complete graph $K_{2n+1}$ with $n$ edges, then there is a Hamiltonian cycle decomposition of $K_{2n+1}$ such that each edge of $H$ is in a separate Hamiltonian cycle. The conjecture was partially settled by Liu and Chen (2023) in cases that $|V(H)|\leqslant n+1$, $H$ is a linear forest, or $n\leqslant 5$. In this paper, we settle the conjecture completely. This result can be viewed as a complete graph analogous of Evans conjecture and  has some applications in linear arboricity conjecture and restricted size Ramsey numbers. 
\end{abstract}



\vskip 1cm 

\section{Introduction}
An (edge) decomposition of a graph $G=(V,E)$ is a partition $\mathcal{C}=(C_1,\ldots,C_n)$ of the edge set $E$ into disjoint subsets $C_i$'s. By $|C_i|$ we mean the number of edges in $C_i$. Given a decomposition $\mathcal{C}=(C_1,\ldots,C_n)$  and a subgraph $H$ of $G$, we say that $H$ is rainbow in $\mathcal{C}$ if no two edges of $H$ are contained in the same set $C_i$, for some $i$.
A Hamiltonian cycle (resp. path) decomposition of a graph $G=(V,E)$ is a decomposition of $G$ into subsets $C_1,\ldots,C_n \subseteq E$ such that each $C_i$ induces a Hamiltonian cycle (resp. path) in $G$.

It is well-known that the complete graph $K_{2n+1}$ has a decomposition into $n$ Hamiltonian cycles. In 1982, Hilton \cite{hilton}, in a seminal work, proposed a procedure for generating all Hamiltonian cycle decompositions of $K_{2n+1}$ and found necessary and sufficient conditions for extending a decomposition of $K_r$ (for some $r\leqslant 2n$) to a \hcd\ of $K_{2n+1}$. 

Wu \cite{wu}, while studying linear arboricity conjecture, raised the question that for any subgraph $H$ of $K_{2n+1}$ with $n$ edges, if the complete graph $K_{2n+1}$ has a \hcd\ in which $H$ is rainbow? 

\begin{conj} {\rm \cite{wu}} \label{conj:main}
Let $H$ be a subgraph of $K_{2n+1}$ with $n$ edges. Then, $K_{2n+1}$ admits a Hamiltonian cycle decomposition $\mathcal{C}$ such that $H$ is rainbow in $\mathcal{C}$.
\end{conj}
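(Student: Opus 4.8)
My plan is to recast the statement as a \emph{completion} problem and then settle it by induction on $n$, with Hilton's extension machinery as the engine. Since $H$ has exactly $n$ edges and $K_{2n+1}$ decomposes into exactly $n$ Hamiltonian cycles, the requirement that $H$ be rainbow is equivalent to asking that each cycle of the decomposition carry \emph{exactly one} edge of $H$. Accordingly I fix a bijection $\phi\colon E(H)\to\{1,\dots,n\}$, read it as a partial edge-colouring of $K_{2n+1}$ assigning the $n$ edges of $H$ the $n$ distinct colours, and seek to extend this partial colouring to a full \hcd. The advantage of this viewpoint is that in the partial colouring each colour class is a \emph{single} edge, so there is no monochromatic vertex of degree exceeding one and no monochromatic cycle; every \emph{local} obstruction to lying inside a Hamiltonian decomposition is therefore absent, and only the global constraints remain. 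I keep the freedom to choose $\phi$ (and below, the vertices removed), and I will spend it to defeat the degenerate configurations.

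The right general object for the induction is a \emph{partial Hamiltonian decomposition}: if one restricts any \hcd\ of $K_{2n+1}$ to a subset of the vertices, each colour class becomes a spanning linear forest, and Hilton's theorem is exactly the statement of when a linear-forest decomposition of $K_r$ extends, colour by colour, to a \hcd\ of $K_{2n+1}$ by weaving in the remaining vertices using only new edges. Crucially this extension is colour-preserving on the old edges, so once an edge of $H$ is placed in a colour class at an early stage it stays there. I will therefore prove, by induction, the stronger assertion that a suitable extendable linear-forest decomposition of $K_r$ in which $H\cap K_r$ is rainbow can be produced and extended; the original statement is the case $r=2n+1$.

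The inductive step deletes two vertices, passing from $K_{2n+1}$ to $K_{2n-1}=K_{2(n-1)+1}$. I pick a pair $u,v$, let $k$ be the number of edges of $H$ meeting $\{u,v\}$, and note that $H':=H-\{u,v\}$ has $n-k$ edges. The induction hypothesis, applied to $H'$ (which has at most $n-1$ edges), supplies a \hcd\ of $K_{2n-1}$ in which $H'$ is rainbow; it uses $n-k$ colours and leaves $k-1$ free, and the single new colour created when $u,v$ are reinserted brings the total of free colours to exactly $k$, matching the $k$ deleted edges of $H$ that must still be placed. Since the arithmetic balances for \emph{every} pair, the only genuine requirements on $u,v$ are that $k\ge 1$ (otherwise $H'$ keeps all $n$ edges and cannot be rainbow in $n-1$ cycles) and that the reinsertion be controllable; a one-line degree count shows $H$ always has an isolated vertex, which I take as $u$, choosing $v$ of least positive degree to keep $k$ small. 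The reinsertion itself is Hilton's extension of $K_{2n-1}$ to $K_{2n+1}$, refined so as to route each of the $k$ deleted edges into one of the $k$ free colours; because the extension preserves the colours of old edges, $H'$ stays rainbow for free.

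The main obstacle is the global \emph{connectivity} condition hidden inside Hilton's theorem: when a colour class is finally completed it must form a \emph{single} spanning cycle rather than several disjoint cycles, and it is precisely here that the prescribed edges of $H$, or an unlucky choice of $\phi$ and of the removed pair, can conspire to fail. Meeting this condition is where the freedom in $\phi$ and in $u,v$ is spent, and where I expect to fall back on the cases already settled by Liu and Chen: when $H$ is a linear forest, when $|V(H)|\le n+1$, or when $n$ is small, the conclusion is known, so the induction need only treat $H$ with a cyclic or high-degree part, for which isolated vertices are plentiful and a convenient low-degree $v$ is available. I anticipate that the substantive work---the genuine analogue of the hard step in the proof of the Evans conjecture---is the controlled, connectivity-preserving refinement of Hilton's extension theorem, together with the separate verification of the finitely many extremal $H$ that the induction cannot reach.
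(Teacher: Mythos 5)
Your plan correctly identifies the two key tools (induction on the number of edges and Hilton's extension criterion), but the inductive step as described cannot work, and the point where it breaks is exactly where the whole difficulty of the problem sits. The induction hypothesis hands you a \hcd\ of $K_{2n-1}$ into $n-1$ Hamiltonian cycles with $H'$ rainbow, and you propose to ``reinsert'' $u,v$ in a colour-preserving way via Hilton's theorem. But a Hamiltonian cycle of $K_{2n-1}$ is not a subgraph of any Hamiltonian cycle of $K_{2n+1}$, so no colour-preserving extension of this object exists: every one of the $n-1$ old cycles must surrender edges when $u$ and $v$ are spliced in, and the new $n$-th colour class, being a Hamiltonian cycle with at most four edges meeting $\{u,v\}$, must absorb at least $2n-3$ surrendered old edges and have them close up (together with at most four new edges) into a single spanning cycle. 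Since each old cycle can give up at most two edges while remaining extendable, the count $2(n-1)\geqslant 2n-3$ leaves essentially no slack: almost every old cycle must donate exactly two edges, all destined for the new class, and these donations must assemble into a near-Hamiltonian linear forest. Theorem~\ref{thm:hilton} does not do this for you --- its input is a decomposition of $K_r$ into $n$ (not $n-1$) linear forests satisfying a component-count condition, and converting your \hcd\ of $K_{2n-1}$ into such an input, while keeping $H'$ rainbow and routing the $k$ edges of $H$ at $v$ into the $k$ free classes, is precisely the ``controlled, connectivity-preserving refinement'' that you defer. The sentence ``because the extension preserves the colours of old edges, $H'$ stays rainbow for free'' is therefore unjustified: old edges necessarily change classes. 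Together with the explicit deferral of ``the finitely many extremal $H$'', this leaves the proposal as a plan rather than a proof.

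For contrast, the paper never passes through a full \hcd\ at an intermediate stage. It applies minimality once, to a subgraph $H''$ of $H'$ (the union of the non-$K_2$ components of $H$, on $r$ vertices) with only $\lceil r/2\rceil$ edges, restricts the resulting decomposition to $K_r$, and then redistributes edges among all $n$ classes to obtain linear forests with the explicit size bounds $|P_i|\geqslant 2r-2n-1$ (resp.\ $2r-2n$ for the classes that must still receive a $K_2$-component) of Theorem~\ref{thm:dense}; the remaining $2n+1-r$ vertices are added two at a time in Theorem~\ref{thm:k2} via an auxiliary bipartite multigraph, balanced edge colourings (Lemma~\ref{lem:dewerra}) and a pairing lemma extracted from Hilton's proof, with one $K_2$-component of $H$ placed per step, and Corollary~\ref{cor:hilton} is invoked only once, at the very end. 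The quantitative slack built into those size bounds is what makes the routing possible, and nothing playing that role appears in your outline.
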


In \cite{liu-chen}, Liu and Chen partially proved  Conjecture~\ref{conj:main} when $n\leqslant 5$, or $H$  has at most $n+1$ vertices, or $H$ is a \textit{linear forest}, where a {linear forest} is a disjoint union of paths.

\begin{theorem} {\rm \cite{liu-chen}} \label{thm:liu}
	Suppose that $H$ is  a subgraph of $K_{2n+1}$ with $n$ edges such that either
	\begin{itemize}
		\item $H$ has at most $n+1$ vertices, or
		\item $H$ is a linear forest, or 
		\item $n\leqslant 5$.
	\end{itemize}
	Then, there is a Hamiltonian cycle decomposition $\mathcal{C}$ of $K_{2n+1}$ such that $H$  is rainbow in $\mathcal{C}$.
\end{theorem}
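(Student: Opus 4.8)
The plan is to convert a rainbow \hcd\ into an edge-colouring problem on a small multigraph and then apply Hilton's disentanglement machinery \cite{hilton}. Observe first that since $K_{2n+1}$ decomposes into exactly $n$ Hamiltonian cycles and $H$ has exactly $n$ edges, ``$H$ rainbow'' means each cycle carries precisely one edge of $H$; equivalently, we seek an $n$-edge-colouring of $K_{2n+1}$ whose colour classes are Hamiltonian cycles and in which the $n$ edges of $H$ receive $n$ distinct colours. The engine is the amalgamation dictionary: identifying a set $S$ of $s$ vertices into a single vertex $w$ (edges inside $S$ become loops, and parallel edges from $S$ to the outside are kept as multi-edges) turns a \hcd\ into an $n$-edge-coloured multigraph $F$ on $2n+2-s$ vertices in which every ordinary vertex has degree $2$ in each colour, $w$ has degree $2s$ in each colour, and every colour class is connected and spanning. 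Hilton's theorem provides the converse: any $F$ meeting these degree and connectivity conditions \emph{disentangles} back into a genuine \hcd. So it suffices to construct such an $F$ with the edges of $H$ rainbow.

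When $|V(H)| \le n+1$ this is clean. Take $S$ to be the at-least-$n$ vertices outside $V(H)$, so $F$ lives on at most $n+2$ vertices, consists of $H$ together with the loops and multi-edges at $w$, and $w$ has degree $2s \ge 2n$ in each colour. I would first colour the $n$ edges of $H$ bijectively with the $n$ colours, then extend to the edges at $w$. The degree constraints amount to a Latin-square-style completion -- exactly where the analogy with the Evans conjecture and the Andersen--Hilton amalgamation proof of it enters -- and the connectivity constraint is cheap here precisely because $w$ is a dominant hub: every colour class already meets $w$ in at least $2n$ edge-ends, so each of the few ordinary vertices need only be hung off this hub, which the abundant $w$-edges allow.

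For a linear forest $H$ the spare vertices may be scarce, so amalgamation loses its force. A forest that is a single path has $n+1$ vertices and is already covered by the previous case; the genuinely new instances are forests with several components, the extreme being a near-perfect matching of size $n$, where only one vertex lies outside $V(H)$ and no useful hub exists. Here I would argue directly on a highly symmetric base decomposition (a Walecki-type \hcd), using its rotational family together with a Hall/SDR argument to assign the matching (or path) edges to distinct colour classes, repairing collisions by short alternating reroutings between two cycles at a time so that each component of $H$ is separated. Finally, the range $n \le 5$ is a bounded verification: only finitely many subgraphs $H$ with $n$ edges arise up to isomorphism, and each can be tested against an explicit list of decompositions.

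The step I expect to be the real obstacle is maintaining the condition that every colour class stays \emph{connected} while the colours of the $n$ edges of $H$ are frozen in advance. The degree conditions are local and yield to standard equitable-colouring arguments, but a careless completion or reroute can break a colour class into two vertex-disjoint cycles, which disentangles to a $2$-factor rather than a Hamiltonian cycle. This global control is easy when a dominant amalgamated hub is available (the case $|V(H)|\le n+1$) and delicate when it is not (dense linear forests and matchings), so the matching-type instances are where the main work lies.
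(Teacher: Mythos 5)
The paper does not prove this statement at all: it is quoted verbatim from Liu and Chen \cite{liu-chen} and used as a black box (e.g.\ to rule out linear forests and small $n$ at the start of the proof of Theorem~\ref{thm:dense}), so there is no in-paper argument to compare yours against. Judged on its own, your proposal correctly identifies the relevant machinery -- Hilton's amalgamation/disentanglement, which is exactly the content of Theorem~\ref{thm:hilton} and Corollary~\ref{cor:hilton} -- but it is a plan rather than a proof, and it has concrete gaps in each of the three cases.

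In the case $|V(H)|\leqslant n+1$, your description of the amalgamated multigraph $F$ is wrong: identifying the vertices outside $V(H)$ into a hub $w$ does not leave ``$H$ together with the loops and multi-edges at $w$''; it leaves the \emph{entire} complete graph on $V(H)$ (all $\binom{r}{2}$ edges, $r=|V(H)|$) plus the $w$-edges and loops. The actual work is to distribute the $\binom{r}{2}-n$ edges of $K_r\setminus H$ among the $n$ colours so that every colour class restricted to $V(H)$ is a linear forest with the right number of components (equivalently, by Corollary~\ref{cor:hilton}, with at least $2r-2n-1$ edges); declaring this ``a Latin-square-style completion'' does not do it, and it is precisely the step the present paper has to labour over in Theorem~\ref{thm:dense} for larger $r$. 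For the linear-forest case you yourself flag the decisive obstacle -- keeping every colour class connected while the colours of $H$ are frozen -- and then leave it unresolved: ``a Hall/SDR argument on a Walecki decomposition, repairing collisions by alternating reroutings'' is not an argument until you exhibit the system of distinct representatives and show the reroutings never disconnect a class, and the near-perfect-matching instance you single out is exactly where no hub is available to absorb errors. The $n\leqslant 5$ case is fine in principle as a finite verification, but ``testing against an explicit list of decompositions'' needs to be organized (the number of Hamiltonian cycle decompositions of $K_{11}$ is enormous; one must instead exhibit one suitable decomposition per isomorphism type of $H$). As it stands the proposal is a reasonable research programme pointing at the right tools, but none of the three cases is actually proved.
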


There are also some partial results regarding Conjecture~\ref{conj:main} in \cite{teerasak}. In this paper, we settle Conjecture~\ref{conj:main} completely. The conjecture has some applications in linear arboricity conjecture as well as Ramsey theory.

 The \textit{linear arboricity} of a graph $G$ denoted by $\la(G) $  is the minimum number $k$ such that the edge-set of $G$ can be decomposed into $k$ linear forest. Linear arboricity conjecture \cite{harrary} asserts that if $G$ has maximum degree $\Delta$, then $\la(G)\leqslant \lceil (\Delta+1)/2\rceil$ (see e.g. \cite{fox,lang} for the literature). A graph $G$ is called \textit{critical} if $\la(G)\geqslant  \lceil (\Delta(G)+1)/2\rceil$ and for any edge $e\in E(G)$, $\la(G)> \la(G\setminus e)$. One can check that if Conjecture~\ref{conj:main} is true, then removing any $n-1$ edges from $K_{2n+1}$ yields a critical graph. 
It can also be seen that if the conjecture is true, then for every graph $G$ with $n$ vertices and at most $(n-1)^2/2$ edges, $\la(G)\leqslant (n-1)/2$ (see \cite{teerasak}). 

Conjecture~\ref{conj:main} can also be viewed as a complete graph similar result of Evans conjecture \cite{evans}. In fact, Evans conjecture (proved by Smetianuk \cite{smetianuk}) equivalently asserts that 
a proper edge coloring of any subgraph $H$ of $K_{n,n}$ with $n$ edges, can be extended to a proper edge coloring of $K_{n,n}$. Andersen and Hilton \cite{anderson} proved analogous results regarding proper edge coloring of complete graphs. In particular, they proved that
\begin{theorem}{\rm \cite{anderson}}\label{thm:anderson}
\begin{itemize}
	\item Let $H$ be a subgraph of $K_{2n}$ with $n-1$ edges. Any proper edge coloring of $H$ can be extended to a proper edge coloring of $K_{2n}$ with $2n-1$ colors.
	\item 
	Let $H$ be a subgraph of $K_{2n+1}$ with $n+1$ edges. Any proper edge coloring of $H$ can be extended to a proper edge coloring of $K_{2n+1}$ with $2n+1$ colors.
\end{itemize}
 \end{theorem}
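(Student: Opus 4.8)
My plan is to recast both items as \emph{extension problems for $1$-factorizations} and then correct the prescribed edges one at a time by alternating-cycle recolourings, using the fact that only few edges are precoloured to stay out of trouble.

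First I would translate the coloured objects into factorizations. Since $K_{2n}$ has $n(2n-1)$ edges and we use exactly $\chi'(K_{2n})=2n-1$ colours, each colour class is forced to be a perfect matching, so a proper $(2n-1)$-colouring of $K_{2n}$ is \emph{exactly} a $1$-factorization. Likewise a proper $(2n+1)$-colouring of $K_{2n+1}$ forces every colour class to be a matching of size $n$ missing one vertex, with the ``missing vertex'' map a bijection between colours and vertices; adjoining an apex $\infty$ and placing the edge $\{v,\infty\}$ in the colour that misses $v$ yields a bijection between these near-$1$-factorizations and the $1$-factorizations of $K_{2n+2}$ that is the identity away from $\infty$. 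Under this bijection the precoloured subgraph $H$ of part~(b) becomes a precoloured subgraph of $K_{2n+2}$ avoiding $\infty$. Hence both parts reduce to a single claim: a proper partial $1$-factorization of some $K_{2m}$ whose precoloured part has only about $m$ edges (namely $m-1$ edges in part~(a), and $m=n+1$ edges all avoiding the free vertex $\infty$ in part~(b)) extends to a full $1$-factorization.

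For the reduced claim I would induct, with a recolouring step as the engine. The base case is the classical existence of a $1$-factorization of $K_{2m}$. In the inductive step I delete one precoloured edge $e=uv$ of target colour $c$, extend what remains to a full $1$-factorization $\phi$ by induction, and then try to repair $e$. If $\phi(e)=c$ we are finished; otherwise, writing $c'=\phi(e)$, the union $\phi^{-1}(c)\cup\phi^{-1}(c')$ of two perfect matchings is a disjoint union of even cycles along which $c$ and $c'$ alternate, and swapping $c\leftrightarrow c'$ on the cycle through $e$ recolours $e$ to $c$ while keeping the colouring proper.

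The crux is \emph{conflict avoidance}: the alternating cycle that repairs $e$ may run through another, already-correct precoloured edge of colour $c$ or $c'$ and undo it. This is where the sparsity is spent. Because at most $m$ edges are precoloured, at least $m-1$ of the $2m-1$ colours do not appear on the precoloured graph at all, and at least one vertex (two of them in part~(a)) is untouched by it; this slack lets one reserve fresh colours and choose both the pair $\{c,c'\}$ and the order in which edges are corrected so that each repairing cycle misses all previously fixed edges. Making this selection watertight is exactly the delicate point, and the cleanest way to do it is to replace the hand bookkeeping by Hilton's \emph{amalgamation--detachment} machinery (in the spirit of \cite{hilton}): collapse the entire uncoloured portion of $K_{2m}$ onto the free vertex/vertices to obtain an edge-coloured amalgamated graph whose colour-degrees satisfy the requisite balance (Hall-type) conditions precisely because so few edges are fixed, and then invoke a Nash--Williams/Hilton detachment theorem to disentangle it into a genuine $1$-factorization of $K_{2m}$ extending the precolouring. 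Finally, part~(b) drops out of part~(a) through the apex correspondence, the single extra precoloured edge being absorbed by the freedom left at $\infty$. I expect the balance-condition verification for the amalgam to be the main technical load.
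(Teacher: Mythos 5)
First, a point of comparison: the paper does not prove this statement at all --- Theorem~\ref{thm:anderson} is quoted verbatim from Andersen and Hilton \cite{anderson} as background motivation for Conjecture~\ref{conj:main}, so there is no in-paper proof to measure your argument against. Your opening translation is correct and standard: a proper $(2n-1)$-edge-colouring of $K_{2n}$ is exactly a $1$-factorization, and the apex construction identifies proper $(2n+1)$-colourings of $K_{2n+1}$ with $1$-factorizations of $K_{2n+2}$. The difficulty is that everything after this translation is a plan rather than a proof, and the two places where you defer the work are precisely where the theorem lives.

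Concretely: (i) In the Kempe-chain repair step, neither colour of the swapping pair is yours to choose --- $c$ is the prescribed colour of the edge $e$ being repaired and $c'=\phi(e)$ is whatever the inductively obtained factorization assigned, so the alternating cycle through $e$ in $\phi^{-1}(c)\cup\phi^{-1}(c')$ is completely determined and may well pass through an already-corrected precoloured edge of colour $c$ or $c'$; reordering the repairs does not give you control over these cycles, so the induction as stated does not close. (ii) The reduction of part (b) to part (a) is not legitimate as written: after the apex lift you have $m=n+1$ prescribed edges in $K_{2m}$, one more than part (a) tolerates, and part (a)'s bound is sharp (in $K_4$ the precolouring $12\mapsto 1$, $34\mapsto 2$ does not extend), so the hypothesis that all prescribed edges avoid $\infty$ must be used essentially --- "absorbed by the freedom left at $\infty$" is an assertion, not an argument. (iii) Your fallback, the amalgamation--detachment route, is indeed the right genre of machinery (it is Hilton's own), but you neither formulate the amalgamated edge-coloured graph nor verify the balance conditions under which a detachment into a $1$-factorization exists; you say yourself that this verification is "the main technical load," and it is: that verification, together with the sharp counting that makes $n-1$ and $n+1$ the right thresholds, \emph{is} the proof. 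The actual Andersen--Hilton argument is a lengthy and delicate analysis, and nothing in the proposal substitutes for it.
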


 One can view Conjecture~1 as an analogous result of Theorem~\ref{thm:anderson} where proper edge coloring is replaced with \hcd\ and $H$ is colored with $n$ different colors. Therefore, finding necessary and sufficient conditions for extending an arbitrary coloring of $H$ to a \hcd\ of $K_{2n+1}$ is an interesting problem and can be viewed as a generalization of Conjecture~\ref{conj:main} (see Concluding Remarks and Conjecture~\ref{conj:evans}).   

Another application of Conjecture~\ref{conj:main} is in Ramsey theory. Given graphs $G_1,\ldots, G_k$, the \textit{Ramsey number} $R(G_1,\ldots, G_k)$ is the minimum number $n$ such that in every edge coloring of $K_n$ with $k$ colors, there is a copy of $G_i$ with color $i$, for some $i\in [k]$. The \textit{restricted size Ramsey number} denoted by $\hat{r}^*(G_1,\ldots,G_k)$ is the minimum number $m$ such that there is a graph $G$ with $R(G_1,\ldots,G_k)$ vertices and $m$ edges, such that in every edge coloring of $G$ with $k$ colors, there is a copy of $G_i$ with color $i$ for some $i\in [k]$. 
Miralaei and Shahsiah in \cite{miralaei} proved some upper bounds for the restricted size Ramsey numbers of stars versus cliques and conjectured that equality holds for these upper bounds. They mentioned that if Conjecture~\ref{conj:main} is correct, then their conjecture is proved for the case that the number of stars of size even is positive and even.
So, combining our result with their proof can yield the following result. 

\begin{theorem}
Let $m,n, p_1,\ldots, p_m, q_1,\ldots, q_n $ be positive integers and $s$ be the number of even integers in the set $\{q_i\}_{i\in [n]}$. If $s$ is positive and even, then
\[
\hat{r}^*(K_{1,q_1},\ldots, K_{1,q_n},K_{p_1},\ldots, K_{p_m})= \binom{t}{2}-\dfrac{s}{2}+1,  
\]  
where $t= R(K_{1,q_1},\ldots, K_{1,q_n},K_{p_1},\ldots, K_{p_m})$.
\end{theorem}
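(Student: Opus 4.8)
The plan is to complement the upper bound of Miralaei and Shahsiah~\cite{miralaei} with a matching lower bound that uses the now-established Conjecture~\ref{conj:main}. Throughout write $t=R(K_{1,q_1},\ldots,K_{1,q_n},K_{p_1},\ldots,K_{p_m})$ and color with $k=n+m$ colors, reserving colors $1,\ldots,n$ for the stars and $n+1,\ldots,n+m$ for the cliques; call a coloring \emph{good} if no vertex sees $q_i$ edges of color $i$ and no color $n+j$ contains a $K_{p_j}$. The upper bound $\hat r^*\le\binom{t}{2}-\tfrac{s}{2}+1$ comes from a degree count already in~\cite{miralaei}: a good coloring forces every vertex to have color-$i$ degree at most $q_i-1$, and since $q_i-1$ is odd for exactly the $s$ even stars while $t$ is odd (this is precisely where the parity of $s$ enters, the Ramsey-number formula making $t$ odd under this hypothesis), each of those $s$ color classes must drop below $q_i-1$ at some vertex, so $|E(G)|\le\binom{t}{2}-\tfrac{s}{2}$. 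Hence any graph with more edges is Ramsey, and the theorem reduces to the reverse inequality $\hat r^*\ge\binom{t}{2}-\tfrac{s}{2}+1$.

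To prove this lower bound I must show that \emph{every} graph on $t$ vertices with at most $\binom{t}{2}-\tfrac{s}{2}$ edges admits a good coloring, hence is not Ramsey. Since a good coloring of a graph restricts to a good coloring of any subgraph, it suffices to do this when exactly $s/2$ edges are missing, i.e.\ to find a good coloring of $K_t\setminus F$ for an arbitrary edge set $F$ with $|F|=s/2$. This is the step that the resolution of Conjecture~\ref{conj:main} makes possible: writing $t=2N+1$ with $N\ge s/2$, I would pad $F$ to an $N$-edge subgraph $H$ of $K_t$ and apply our main theorem to obtain a decomposition of $K_t$ into $N$ Hamiltonian cycles in which $H$ — and therefore $F$ — is rainbow, so that the $s/2$ edges of $F$ lie in $s/2$ distinct cycles. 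I then distribute the cycles among the colors: each odd star $K_{1,q_i}$ ($q_i-1$ even) receives $(q_i-1)/2$ whole cycles, giving every vertex color-$i$ degree exactly $q_i-1$; the $s$ even stars are paired off, and for each pair $(a,b)$ I take $(q_a-2)/2$ whole cycles for color $a$, $(q_b-2)/2$ for color $b$, and the one cycle of the pair that contains an edge of $F$, delete that edge, and split the resulting Hamiltonian path into its two alternating matchings, assigning one to $a$ and one to $b$. A short check shows the deleted edge absorbs exactly the parity defect, so both colors stay at degree $\le q_i-1$ everywhere, and the cycle count matches $N$ exactly. The clique colors are filled in as in~\cite{miralaei}, by cycles (or unions of cycles) that are $K_{p_j}$-free; these do not interact with the parity bookkeeping, which is driven entirely by the even stars.

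The main obstacle is precisely handling an \emph{arbitrary} $F$. The parity defect of the $s$ even stars must be cancelled by deleting one carefully placed edge per pair, and for this to succeed for every $F$ one needs a Hamiltonian cycle decomposition in which the prescribed edges lie in distinct cycles — exactly the rainbow property of Conjecture~\ref{conj:main}. Earlier partial results such as Theorem~\ref{thm:liu} do not suffice here, since $F$ is a general set of $s/2$ edges, neither a linear forest nor necessarily small, which is why the full strength of the conjecture is needed. A secondary technical point is to verify that the clique color classes coming from~\cite{miralaei} remain $K_{p_j}$-free once the cycles have been committed to the star colors, and to confirm that the Ramsey-number formula indeed yields an odd $t$ under the hypothesis that $s$ is positive and even, so that the Hamiltonian cycle machinery applies.
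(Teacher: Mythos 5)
First, be aware that the paper does not actually prove this theorem: it is stated as an immediate corollary of the main result combined with the reduction already carried out in \cite{miralaei}, who proved the upper bound $\hat r^*\leqslant\binom{t}{2}-\frac{s}{2}+1$ unconditionally and showed that the matching lower bound follows from Conjecture~\ref{conj:main} when $s$ is positive and even. Your proposal therefore reconstructs an argument the paper only cites. Your identification of where the rainbow Hamiltonian cycle decomposition enters is correct in spirit: one must produce a good coloring of $K_t\setminus F$ for an \emph{arbitrary} set $F$ of $s/2$ edges, and the mechanism is to place the edges of $F$ in distinct Hamiltonian cycles so that each missing edge cancels the parity defect of one pair of even stars; this is exactly why the full conjecture, rather than the linear-forest or bounded-vertex cases of Theorem~\ref{thm:liu}, is needed. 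Your degree/parity bookkeeping for the star colors (whole cycles for the odd stars, $(q_i-2)/2$ cycles plus one matching of a broken cycle for each even star in a pair) is sound.

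The genuine gap is in the clique colors. You apply the rainbow decomposition to all of $K_t=K_{2N+1}$ and then assign the leftover cycles to the colors $n+1,\ldots,n+m$, asserting they can be taken $K_{p_j}$-free ``as in \cite{miralaei}.'' But once an arbitrary rainbow Hamiltonian cycle decomposition of $K_t$ has been fixed, the roughly $\sum_i(q_i-1)(p-2)/2$ leftover cycles (with $p=R(K_{p_1},\ldots,K_{p_m})$) are whatever they are, and there is no reason their union can be partitioned into $m$ classes with class $j$ free of $K_{p_j}$: already the union of two Hamiltonian cycles of $K_t$ typically contains triangles, so even the case $m=1$, $p_1=3$ fails for a generic decomposition. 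The construction has to impose a block structure on $K_t$ first: $p-1$ blocks, one of size $\sum_i(q_i-1)+1$ (odd, since $s$ is even) and the rest of size $\sum_i(q_i-1)$, with the clique colors placed only on between-block edges according to a blow-up of an extremal coloring of $K_{p-1}$ (hence $K_{p_j}$-free by construction), and with the rainbow Hamiltonian cycle decomposition applied only \emph{inside the large block}, which is where the parity obstruction for the even stars lives. Since $F$ spans at most $s\leqslant\sum_i(q_i-1)$ vertices, one may relabel so that $F$ lies entirely in that block, after which your cycle-splitting argument goes through verbatim there. As written, the global application of Conjecture~\ref{conj:main} to $K_t$ does not produce $K_{p_j}$-free clique classes, so the lower-bound argument does not close.
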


In order to prove Conjecture~\ref{conj:main}, we need the following result from \cite{hilton} which deals with the extension of an edge decomposition of $K_r$ to a \hcd\ of $K_{2n+1}$. 

\begin{theorem}{\rm \cite{hilton}}\label{thm:hilton}
Let $1\leqslant r\leqslant 2n$ be an integer. A decomposition $\mathcal{C}=(C_1,\ldots,C_n)$ of $K_r$ can be extended to a \hcd\ $\mathcal{C}'=(C'_1,\ldots,C'_n)$ of $K_{2n+1}$ where $C_i\subseteq C'_i$, for each $i\in[n]$, if and only if the induced subgraph of $G$ on each $C_i$ is a linear forest with at most $2n+1-r$ disjoint paths $($counting a vertex of $K_r$ with no edge in $C_i$ as a path of lengths $0)$.
\end{theorem}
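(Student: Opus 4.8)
Write $s := 2n+1-r$ for the number of vertices of $K_{2n+1}$ lying outside the fixed copy of $K_r$; I will call these the \emph{new} vertices and the $r$ vertices of $K_r$ the \emph{old} vertices. The statement is an equivalence, so I would treat the two directions separately, and the necessity direction is short. If $\mathcal{C}'$ extends $\mathcal{C}$, then because the $C'_i$ partition $E(K_{2n+1})$ while the $C_i$ partition $E(K_r)$, one must have $C_i = C'_i\cap E(K_r)$; hence $C_i$ is a subgraph of the cycle $C'_i$ and so a linear forest. Reading $C'_i$ as a cyclic sequence of its $2n+1$ vertices, the maximal runs of consecutive old vertices are exactly the (non-trivial and trivial) paths of $C_i$, so the number of paths of $C_i$ equals the number of such runs. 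Since both old and new vertices occur (as $1\le r\le 2n$), runs of old and new vertices alternate and are equal in number, and the number of runs of new vertices is at most the number of new vertices, namely $s$. Hence $C_i$ has at most $s$ paths, as required.

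For sufficiency I would use the amalgamation method. Form the multigraph $G^{*}$ on the old vertices together with one extra vertex $w$, obtained from $K_{2n+1}$ by identifying all $s$ new vertices into $w$: keep the edges of $K_r$, place $s$ parallel edges between $w$ and each old vertex, and put $\binom{s}{2}$ loops at $w$. Under this identification a \hcd\ of $K_{2n+1}$ extending $\mathcal{C}$ corresponds to an $n$-colouring of $E(G^{*})$ extending $\mathcal{C}$ in which every colour class $D_i\supseteq C_i$ is connected and spanning, every old vertex has degree $2$ in $D_i$, and $w$ has degree $2s$ in $D_i$ (each loop contributing $2$). The plan is to produce such a colouring and then to \emph{detach} $w$ back into $s$ vertices of degree $2$ so that each colour class becomes a single spanning cycle.

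The colouring exists precisely because of the hypothesis. Writing $p_i$ for the number of paths of $C_i$, raising every old vertex to degree $2$ in colour $i$ forces exactly $2p_i$ of the $w$–old edges into $D_i$ (two per path: a non-trivial path needs one edge at each end, a trivial path needs two), after which the remaining degree at $w$ is supplied by $s-p_i$ loops. The inequality $p_i\le s$ is exactly the statement that the required number of loops $s-p_i$ is non-negative, so the construction never demands more loops than are available; and the global totals match, since each old vertex $v$ must receive $\sum_i\bigl(2-\deg_{C_i}(v)\bigr)=2n-(r-1)=s$ edges to $w$ (matching its $s$ parallel edges) and $\sum_i (s-p_i)=\binom{s}{2}$ (matching the loops at $w$). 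That these per-vertex and per-colour demands can be met \emph{simultaneously} — equivalently, that the $w$–old edges can be coloured so that each colour class attaches to exactly the right path-ends — is a degree-constrained assignment that I would settle by a Hall/flow argument. One can also proceed by induction on $s$, detaching one new vertex at a time; the base case $s=1$ is the observation that when each $C_i$ is a Hamiltonian path of $K_{2n}$ every old vertex is a path-endpoint in exactly one colour, so the single new vertex can be joined to the two ends of each path.

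The hard part will be the final detachment step. Because the $s$ new vertices are identified in one operation, a single detachment of $w$ into $s$ degree-$2$ vertices must make \emph{all} $n$ colour classes connected at once, turning each into a genuine Hamiltonian cycle rather than a disjoint union of shorter cycles. Guaranteeing such a simultaneous, connectivity-preserving detachment under the balance conditions above is the technical heart of the argument — this is exactly the content of the Nash-Williams / Hilton amalgamation theorems — and I expect it to be the step demanding the most care.
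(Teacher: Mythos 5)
This statement is quoted from Hilton \cite{hilton} and the paper gives no proof of it, so the comparison must be against Hilton's argument and against the detachment machinery the paper itself builds (de Werra's balanced colourings, Lemma~\ref{lem:hilton}, and the vertex-by-vertex extension in the proof of Theorem~\ref{thm:k2}), which is exactly the toolkit needed here. Your necessity direction is complete and correct: the alternation of runs of old and new vertices around each cycle $C'_i$ does give at most $s=2n+1-r$ paths in $C_i$, and your identity $C_i=C'_i\cap E(K_r)$ is justified. Your counting in the amalgamated multigraph $G^*$ is also right: each colour needs $2p_i$ edges from $w$ to old vertices and $s-p_i$ loops, the totals $\sum_i(2-\deg_{C_i}(v))=s$ and $\sum_i(s-p_i)=\binom{s}{2}$ check out, and $p_i\leqslant s$ is precisely the non-negativity of the loop count.

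The gap is that the sufficiency direction is never actually carried out. The two steps you defer are the entire content of the theorem: (a) the existence of a colouring of the $w$--old edges and loops meeting all the per-vertex and per-colour demands simultaneously is asserted via an unspecified ``Hall/flow argument''; and (b) the detachment of $w$ into $s$ degree-$2$ vertices so that every colour class becomes a single Hamiltonian cycle is delegated to ``the Nash-Williams / Hilton amalgamation theorems'' --- but Theorem~\ref{thm:hilton} \emph{is} such an amalgamation theorem, so this is essentially circular as written. The workable route, which you mention only in passing and for which you verify only the base case $s=1$, is the one-vertex-at-a-time induction: given a decomposition of $K_m$ into $n$ linear forests each with at most $2n+1-m$ components, build the auxiliary bipartite multigraph joining a colour $c_i$ to each path-endpoint once and to each isolated vertex twice, take a balanced $(2n-m+1)$-edge-colouring (Lemma~\ref{lem:dewerra}), and use the pairing condition of Lemma~\ref{lem:hilton} to ensure that the two edges of a given colour assigned to the new vertex do not land on the two ends of a single path (which would close a cycle prematurely) and that each class stays a linear forest with at most $2n-m$ components. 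The inductive step is where all the work lies, and your proposal does not supply it.
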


Note that any linear forest on $r$ vertices with $e$ edges has exactly $r-e$ connected components. Therefore, Theorem~\ref{thm:hilton} can be reformulated as follows.

\begin{corollary}{\rm \cite{hilton}}\label{cor:hilton}
	Let $1\leqslant r\leqslant 2n$ be an integer. An edge decomposition $\mathcal{C}=(C_1,\ldots,C_n)$ of $K_r$ can be extended to a \hcd\ $\mathcal{C}'=(C'_1,\ldots,C'_n)$ of $K_{2n+1}$ where $C_i\subseteq C'_i$, for each $i\in[n]$, if and only if
	\begin{enumerate}
		\item for each $i\in [n]$, the induced subgraph of $K_r$ on $C_i$ is a linear forest, and
		\item for each $i\in [n]$, $|C_i|\geqslant 2r-2n-1$.
	\end{enumerate}
\end{corollary}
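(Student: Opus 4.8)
The plan is to deduce Corollary~\ref{cor:hilton} directly from Theorem~\ref{thm:hilton}, since the two statements share an identical hypothesis (an edge decomposition $\mathcal{C}=(C_1,\ldots,C_n)$ of $K_r$) and an identical conclusion (extendability to a \hcd\ of $K_{2n+1}$ respecting the inclusions $C_i\subseteq C_i'$). The only difference lies in how the combinatorial condition on the blocks $C_i$ is phrased, so the entire task is to show that conditions~(1) and~(2) of the corollary are together equivalent to the single condition of Theorem~\ref{thm:hilton}, namely that each $C_i$ induces a linear forest with at most $2n+1-r$ disjoint paths.

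First I would observe that condition~(1) of the corollary is verbatim the linear-forest requirement of Theorem~\ref{thm:hilton}, so nothing needs to be proved there; all the work is in translating the bound on the number of paths into the lower bound on $|C_i|$. Assuming condition~(1), I would invoke the observation recorded immediately before the corollary: a linear forest on $r$ vertices with $e$ edges has exactly $r-e$ connected components. Viewing the forest induced by $C_i$ as spanning all $r$ vertices of $K_r$ --- where each vertex incident to no edge of $C_i$ is counted as a trivial path of length $0$, which is precisely the counting convention adopted in Theorem~\ref{thm:hilton} --- the number of disjoint paths in $C_i$ is therefore exactly $r-|C_i|$.

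With this count in hand, the bound \enquote{at most $2n+1-r$ disjoint paths} reads $r-|C_i|\leqslant 2n+1-r$, which rearranges to $|C_i|\geqslant 2r-2n-1$, precisely condition~(2). Since this equivalence holds for every $i\in[n]$, and condition~(1) coincides with the linear-forest hypothesis, the conjunction of~(1) and~(2) is equivalent to the hypothesis of Theorem~\ref{thm:hilton}, and the corollary follows at once. I do not expect any genuine obstacle here: the statement is a routine reformulation, and the only point demanding care is the bookkeeping in the path count, namely remembering to include isolated vertices of $K_r$ as length-$0$ paths so that the component count of the spanning linear forest is exactly $r-|C_i|$ and not an undercount.
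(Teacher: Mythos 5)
Your proposal is correct and follows exactly the paper's route: the paper derives the corollary from Theorem~\ref{thm:hilton} via the same one-line observation that a spanning linear forest on $r$ vertices with $e$ edges (isolated vertices counted as length-$0$ paths) has exactly $r-e$ components, so the path bound $r-|C_i|\leqslant 2n+1-r$ rearranges to $|C_i|\geqslant 2r-2n-1$. Nothing is missing.
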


To prove Conjecture~\ref{conj:main}, we take a minimal counterexample $H$ to the conjecture with $n$ edges, in the sense that $n$ is the smallest integer for which the conjecture does not hold and among all counterexamples with $n$ edges, $H$ is the graph with the smallest number of vertices. 
We give rise to a contradiction in two phases. First, in  Section~\ref{sec:dense}, we embed the components of $H$ which are non-isomorphic to $K_2$ into an edge decomposition of $K_r$ for some $r$ with desired properties and then in Section~\ref{sec:sparse}, we extend this decomposition to a \hcd\ of $K_{2n+1}$ in which $H$ is rainbow. This comes to a contradiction and proves Conjecture~\ref{conj:main}.

\subsection{Notations and Terminologies} 
For positive integers $m,n$, $m\leqslant n$, the sets $\{1,\ldots, n\} $ and $\{m,\ldots, n\} $ are respectively denoted by $[n]$ and $[m,n]$. Given a graph $G=(V(G),E(G))$, $v(G)$ and $e(G)$ respectively stand for $|V(G)|$ and $|E(G)|$. For a subset $F\subseteq E(G)$, the induced subgraph of $G$ on $F$ is denoted by $G[F]$. For any vertex $v\in V(G)$, the set of neighbors of $v$ in $G$ and $G[F]$ are respectively denoted by $N_G(v)$ and $N_F(v)$. Also, $\deg_G(v)$ and $\deg_F(v)$ stand for the number of edges incident with $v$ in $E(G)$ and $F$, respectively. For two disjoint subsets $A,B\subset V(G)$, $e_G(A,B)$ stands for the number of edges in $G$ with one endpoint in $A$ and one endpoint in $B$ and we define $e_F(A,B)=e_{G[F]}(A,B)$. Also, we define $N_F(A)=\cup_{a\in A} N_F(a)$. If $H$ is a subgraph of $G$, then $G\setminus H$ is the graph obtained from $G$ by removing all edges of $H$.  

\section{Embedding Components Non-isomorphic to $K_2$} \label{sec:dense}
Let $H$ be a minimal counterexample to Conjecture~\ref{conj:main} with $e(H)=n$. Also, let $H'$ be the subgraph of $H$ consisting of all connected components of $H$ which are non-isomorphic to $K_2$. In this section, we prove that there exists an edge decomposition of the complete graph on $v(H')$ vertices with some desired properties in which $H'$ is rainbow. Then, in the next section, we extend such decomposition to a \hcd\ of $K_{2n+1}$ in which $H$ is rainbow.  
%

\begin{theorem} \label{thm:dense}
Let $H$ be a minimal counterexample to Conjecture~\ref{conj:main} with $n$ edges. Also let $H'$ be the subgraph of $H$ containing all its connected components which are not isomorphic to $K_2$ and suppose that $H'$ has $r$ vertices and $t$ edges. Then, the complete graph $K_r$ admits an edge decomposition $(P_1,\ldots, P_n)$ such that
\begin{itemize}
    \item[(1)] For each $i\in [n]$, $P_i$ induces a linear forest in $K_r$.
    \item[(2)] $H'$ is rainbow in $(P_1,\ldots,P_t)$.
    \item[(3)] For each $i\in [t]$, $|P_i|\geqslant 2r-2n-1$.
    \item[(4)] For each $i\in [t+1,n]$, $|P_i|\geqslant 2r-2n$.
\end{itemize}
\end{theorem}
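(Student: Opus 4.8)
The plan is to first extract from the minimality of $H$ the structural facts that make the prescribed sizes feasible, then reformulate conditions (3)--(4) as constraints on the number of components of each part, and finally build the decomposition by pinning down $H'$ and balancing the remaining edges. To begin, I would note that a minimal counterexample has no isolated vertices: if $v$ were isolated in $H$, then $H-v$ has the same $n$ edges but fewer vertices, so by minimality it is not a counterexample, and any \hcd\ of $K_{2n+1}$ in which $H-v$ is rainbow also makes $H$ rainbow, contradicting that $H$ is a counterexample. Hence every component of $H'$ has at least three vertices, so its number of components $c'$ satisfies $c'\le r/3$, while connectedness gives $r\le t+c'$; together these yield $r\le 3t/2$. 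Combined with $t\le n$, this makes the edge budget strictly sufficient: one checks that $\binom r2-\big(t(2r-2n-1)+(n-t)(2r-2n)\big)>0$, the extremal case being $H'$ a disjoint union of copies of $P_3$.

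Next I would reinterpret the size bounds. A linear forest on $r$ vertices with $|P_i|$ edges has exactly $r-|P_i|$ connected components (counting isolated vertices), so $|P_i|\ge 2r-2n-1$ is equivalent to $P_i$ having at most $2n+1-r$ components and $|P_i|\ge 2r-2n$ to at most $2n-r$ components; moreover the total number of components, summed over all parts, equals $nr-\binom r2$. Thus the task becomes: partition $E(K_r)$ into $n$ linear forests so that each of $P_1,\dots,P_t$ has at most $2n+1-r$ components, each of $P_{t+1},\dots,P_n$ has at most $2n-r$ components, and $H'$ is rainbow in $(P_1,\dots,P_t)$. When $r\le n$ the caps are at least $r$, so every part (even an empty one) is admissible and only the rainbow requirement is nontrivial; the substance lies in the regime $r\ge n+1$, where $r\le 3t/2$ forces $t\ge 2(n+1)/3$ and every part must be large.

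For the construction I would place the $t$ edges of $H'$ into the distinct parts $P_1,\dots,P_t$, securing (2), and then complete this to a full linear-forest decomposition of $K_r$, distributing the remaining $\binom r2-t$ edges so as to keep the component count of each part below its cap. The balancing is governed by the per-vertex degree-deficiency budget $2n-(r-1)$ together with the global slack of the previous paragraph: whenever a deficient part cannot directly absorb an uncolored edge without creating a degree-$3$ vertex or closing a cycle, I would reroute along an alternating/augmenting path so that the target part gains an edge while every part stays a linear forest. Equivalently, I expect one can amalgamate the $2n+1-r$ outer vertices into a single vertex and invoke a Hilton-type detachment/equalization step, in the spirit of Theorem~\ref{thm:hilton} and Theorem~\ref{thm:anderson}, to realize the balanced component counts.

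The hard part will be meeting the lower bounds simultaneously with the linear-forest and rainbow constraints in the near-extremal regime $r\approx 3t/2$, $t\approx n$, where the slack is only of order $n$: each rebalancing exchange must avoid creating a cycle or a degree-$3$ vertex and, crucially, must never move two edges of $H'$ into a common part. I therefore expect the delicate point to be showing that the augmenting/detachment step can always be performed among the non-$H'$ edges, so that the rainbow property established at the outset is preserved throughout the balancing.
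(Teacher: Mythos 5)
Your reformulation of conditions (3)--(4) as component-count caps and your reduction to the case $r\geqslant n+1$ match the paper, but the core of your argument is missing. The only place you use the minimality of $H$ is to delete isolated vertices; the paper's proof uses minimality \emph{recursively}, and that recursion is the engine of the whole construction. Specifically, the paper sets $s=\lceil r/2\rceil$, picks a subgraph $H''\subseteq H'$ with $s<n$ edges, and invokes the conjecture for $H''$ (valid by minimality of the counterexample) to obtain a Hamiltonian cycle decomposition of $K_{2s+1}$ in which $H''$ is rainbow. Restricting to $K_r$ yields $s$ classes that are already linear forests with at least $r-2$ edges each and that already contain half of $H'$ rainbowly. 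Only from this very strong starting configuration does the paper redistribute edges into the small classes, and even then it needs two separate counting cases ($n+1\leqslant r\leqslant (4n-1)/3$ and $4n/3\leqslant r\leqslant (3t-1)/2$) with explicit lower bounds on $|P_{n-s}|$ and $|P_{2n-2s}|$ to show the donor classes are large enough, together with a careful choice of which edges to move (excluding the $H''$-edge $e'$ and separator edges $e_1,e_2$, and the sets $P'_j,P'_\ell$ protecting inner vertices) so that every class stays a linear forest and $H'$ stays rainbow.

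By contrast, your construction step --- place the $t$ edges of $H'$ into distinct parts and then ``reroute along an alternating/augmenting path'' or ``invoke a Hilton-type detachment/equalization step'' whenever a part cannot absorb an edge --- is asserted rather than proved, and you yourself flag it as the hard part (``I expect one can\ldots''). The counting check that $\binom{r}{2}$ exceeds the sum of the required lower bounds establishes feasibility of the sizes in isolation, but it does not produce a decomposition: the obstruction is satisfying the linear-forest condition, the size lower bounds, and the rainbow condition \emph{simultaneously}, and no argument is given that an augmenting exchange preserving all three always exists. Hilton's theorem (Theorem~\ref{thm:hilton}) characterizes extendability of a given decomposition of $K_r$; it does not by itself construct a decomposition of $K_r$ with a prescribed rainbow subgraph pinned to prescribed classes. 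So the proposal, as written, identifies the right target and the right difficulty but leaves the theorem unproved; the missing idea is precisely the recursive application of the conjecture to $H''$ that the paper uses to bootstrap the construction.
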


\begin{proof}
Due to Theorem~\ref{thm:liu}, $H$ is not a linear forest and $n\geqslant 6$. Also, every connected component of $H'$ has at least two edges. Thus, $H'$ has at most $(t-1)/2$ connected components  and so $r\leqslant t+(t-1)/2=(3t-1)/2$. Also, we can assume that $r \geqslant n+1$. To see this, suppose that $r\leqslant n$. Then, we can extend $H'$ to a graph $\widehat{H}$ such that $v(\widehat{H})\leqslant e(\widehat{H})=n$. Thus, by Theorem~\ref{thm:liu}, $K_{2n+1}$ admits a \hcd\  $\mathcal{C}=(C_1,\ldots, C_n)$ such that $\widehat{H}$ is rainbow in $\mathcal{C}$. Without loss of generality, we can assume that $H'$ is rainbow in $C_1,\ldots,C_t$. Also, since $v(H')=r$, we can remove $2n+1-r$ vertices from $K_{2n+1}$ to obtain an edge decomposition $(P_1,\ldots, P_n)$ of $K_r$ where $H'$ is rainbow in $P_1,\ldots, P_t$. Conditions (3) and (4) trivially hold as $r\leqslant n$. Therefore, we can assume that $r\geqslant n+1$. 

Now, let $s= \lceil r/2\rceil $ and $H''$ be a subgraph of $H'$ with $e(H'')= s$. Since $H$ is a minimal counterexample and $H''$ has fewer edges than $H$, there is a 
\hcd\ $\mathcal{C}=(C_1,\ldots,C_{ s })$ for $K_{2s+1}$ such that $H''$ is rainbow in $\mathcal{C}$. Since $v(H'')\leqslant v(H')=r$, we can remove from $K_{2s+1}$ one vertex whenever $r$ is even and two vertices whenever $r$ is odd to obtain an edge decomposition $\mathcal{P}=(P_1,\ldots, P_{s})$ of $K_{r}$ in which $H''$ is rainbow and each $P_i$ contains $r-1$ edges when $r$ is even and at least $r-2$ edges when $r$ is odd. Since $v(H')=r$, we can assume that $H'$ is a subgraph of $K_{r}$. The subgraph $H'\setminus H''$ has exactly $t-s$ edges which are distributed in the subsets $P_i$'s. Now, define $t-s$ new empty subsets $P_{s+1}, \ldots, P_{t}$ and move each edge of $H'\setminus H''$ from $P_1,\ldots,P_{s}$ to a separate set $P_i$, $s+1\leqslant i\leqslant t$. Also, define $P_{t+1},\ldots,P_n$ as empty subsets. By abuse of notation, let $\mathcal{P}=(P_1,\ldots, P_n)$ be the obtained decomposition of $K_{r}$ where $H'$ is rainbow in $P_1,\ldots,P_{t}$. Also, without loss of generality, assume that $|P_1|\geqslant |P_2|\geqslant \cdots\geqslant |P_{s}|\geqslant |P_{s+1}|=\cdots=|P_{t}|=1> |P_{t+1}|=\cdots=|P_n|=0$. We are going to move some edges from large $P_i$'s to small $P_i$'s to ensure that each $P_i$ induces a linear forest such that 
\begin{align}
|P_i|\geqslant \begin{cases}
2r-2n-1 & 1\leqslant i\leqslant t, \\ 
2r-2n & t+1\leqslant i\leqslant n.
\end{cases}
\label{eq:pi}
\end{align}

Now, we consider the following two cases.

\paragraph*{Case 1. $n+1\leqslant r\leqslant (4n-1)/3$.} 
First, we claim that $|P_{n-s}|\geqslant 4r-4n-1$. To prove the claim, by the contrary, suppose that $|P_{n-s}|\leqslant 4r-4n-2$. Therefore, $|P_i|\leqslant 4r-4n-2$, for all $n-s\leqslant i\leqslant s$. Since before moving edges of $H'\setminus H''$, each $P_i$ has at least $r-2$ edges, we have
\begin{align*}
2n-r\geqslant {2t-r}\geqslant 2\, e(H'\setminus H'')&\geqslant 2\sum_{i=n-s}^{s} \left(r-2-|P_i|\right) \\
&\geqslant 2\sum_{i=n-s}^{s} \left(r-2-4r+4n+2\right)\\
&\geqslant  2(r-n+1)(4n-3r).\\
\end{align*}
Thus, $f_n(r)= 2(r-n+1)(4n-3r)-2n+r\leqslant 0$. On the other hand, when $n+1\leqslant r\leqslant (4n-1)/3$, the function $f_n(r)$ attains its minimum on $r=n+1$ or $r=(4n-1)/3$. However, $f_n(n+1)=3n-11 >0$ as $n\geqslant 6$ and $f_n((4n-1)/3)=1 >0$. This is a contradiction which proves the claim. Therefore, $|P_1|\geqslant \cdots\geqslant |P_{n-s}|\geqslant 4r-4n-1$.

Now, for each $i$, $s+1\leqslant i\leqslant t$, do the following. Let $ab$ be the edge of $H'\setminus H''$ in $P_i$. Consider the set $P_j=P_{i-s}$ and let $e'$ be the unique edge of $H''$ in $P_j$. Also, let $e_1$ and $e_2$ be two edges in $P_j$ incident with $a$ and $b$,  respectively (if exist) such that in $G[P_j\setminus \{e_1,e_2\}]$, $a$ and $b$ are in different connected components. Now, move $2r-2n-2$ edges from $P_j\setminus \{e',e_1,e_2\}$ to $P_i$. Since $|P_j|\geqslant 4r-4n-1\geqslant 2r-2n-2+3$, this can be done. Since $P_i$ has already contained the edge $ab$ and we excluded the edges $e_1$ and $e_2$, so after moving these $2r-2n-2$ edges, $G[P_i]$ is a linear forest.  Also, the remaining edges in $P_j$ are at least $4r-4n-1-(2r-2n-2)=2r-2n+1$. 

Now, for each $i$, $t+1\leqslant i\leqslant n$, do the following. Consider the set $P_j=P_{i-s}$ and let $e'$ be the unique edge of $H''$ in $P_j$. Now, move $2r-2n$ arbitrary edges from $P_j\setminus \{e'\}$ to $P_i$. Since $|P_j|\geqslant 4r-4n-1\geqslant 2r-2n+1$, this can be done. Also, the remaining edges in $P_j$ are at least $4r-4n-1-(2r-2n)=2r-2n-1$. 
On the other hand, for each $i$, $n-s+1\leqslant i\leqslant s$, $|P_i|\geqslant (r-2)-e(H'\setminus H'')=r-2-t+s\geqslant 2r-2n-1$, as $t\leqslant n$.

Hence, every $P_i$ satisfies Condition~\eqref{eq:pi},  $G[P_i]$ is a linear forest and we are done.

\paragraph*{Case 2. $4n/3\leqslant r\leqslant (3t-1)/2$.}
It is clear that in this case $(8n+3)/9\leqslant t\leqslant n$.  
First, we claim that $|P_{2n-2s}|\geqslant 3r-3n-\epsilon$, where $\epsilon=\max\{t-n+1,0\}$. To prove the claim, by the contrary, suppose that $|P_{2n-2s}|\leqslant 3r-3n-\epsilon-1$. Therefore, $|P_i|\leqslant 3r-3n-\epsilon-1$, for all $2n-2s\leqslant i\leqslant s$. Now, define $\delta=0$ when $r$ is even and $\delta=1$ when $r$ is odd. Since before moving edges of $H'\setminus H''$, each $P_i$ has at least $r-1-\delta$ edges, so
\begin{align*}
2t-r-\delta=2t-2s=  2\, e(H'\setminus H'')&\geqslant 2\sum_{i=2n-2s}^{s} \left(r-1-\delta-|P_i|\right) \\
&\geqslant 2\sum_{i=2n-2s}^{s} (r-1-\delta-(3r-3n-\epsilon-1))\\
& \geqslant  (3r-4n+2)(3n-2r-\delta+\epsilon). 
\end{align*}
Thus, $f_{n,t}(r)= (3r-4n+2)(3n-2r-\delta+\epsilon)-2t+r+\delta\leqslant 0$. On the other hand, when $4n/3\leqslant r\leqslant (3t-1)/2$, the function $f_{n,t}(r)$ attains its minimum on $r=4n/3$ or $r=(3t-1)/2$. However, $f_{n,t}(4n/3)=2n-2t-\delta+2\epsilon >0$, as  $\epsilon=1$ whenever $t=n$ and $\epsilon=0$ whenever $t<n$. Also, $2f_{n,t}((3t-1)/2)=(9t-8n+1)(3n-3t+1-\delta+\epsilon)-t+2\delta-1$. Note that if $t=n$, then $\epsilon=1$ and $2f_{n,t}((3t-1)/2)=(n+1)(1-\delta)+2\delta>0$. So, $(8n+3)/9\leqslant t\leqslant n-1$, then $n\geqslant 12$, $\epsilon=0$ and 
the function $g_n(t)=2f_{n,t}((3t-1)/2)$ attains its minimum on $t=(8n+3)/9$ or $t=n-1$. However, $g_n((8n+3)/9)= 4n/9-2\delta-4/3>0$ as $n\geqslant 12$ and $g_n(n-1)= (n-8)(4-\delta)-n+2\delta= n(3-\delta)+10\delta-32>0$ as $n\geqslant 12$.  
This leads to a contradiction and proves the claim.
Therefore, $|P_1|\geqslant \cdots\geqslant |P_{2n-2s}|\geqslant 3r-3n-\epsilon$. 
Now, for each $i$, $s+1\leqslant i\leqslant t$, do the following process. 

Suppose that $P_i=\{ab\}$, where $ab$ is an edge of $H'\setminus H''$. We are going to move $2r-2n-2$ edges from some sets $P_j$ and $P_\ell$ to $P_i$ to guarantee Condition \eqref{eq:pi}. First, consider $P_j=P_{i-s}$ which has at least $3r-3n-\epsilon$ edges. Also, let $e'$ be the unique edge of $H''$ in $P_j$. Also, we can choose at most two edges $e_1,e_2$ in $P_j$ incident with $a,b$, respectively, such that in $G[P_j\setminus \{e_1,e_2\}]$, vertices $a$ and $b$ are in different connected components.
Now, let $P_j'$ be a set of $r-n-2$ arbitrary edges in  $P_j\setminus \{e',e_1,e_2\}$ (it exists since $3r-3n-\epsilon-3\geqslant r-n-2$). Move all edges in $P_j'$ from $P_j$ to $P_i$. Then, $P_i$ now induces a linear forest in $G$ with $r-n-1$ edges. Let $I_i$ and $J_i$ be respectively the set of all inner vertices and endpoints of the paths in $G[P_i]$.

Now, consider $P_\ell= P_{i-2s+t}$ and let $e'$ be the unique edge of $H''$ in $P_\ell$. Now, let $P_\ell'$ be a subset of $P_\ell$ containing all edges incident with $I_i$ along with at most one edge incident with each vertex in $J_i$ such that if $z,w$ are two endpoints of a path in $G[P_i]$ then $z$ and $w$ are in different connected components of $G[P_\ell \setminus P_\ell']$. Since $P_\ell$ induces a linear forest, we have $|P_\ell'|\leqslant 2|I_i|+|J_i|=2|P_i|=2(r-n-1)$. Therefore, since $|P_\ell|\geqslant 3r-3n-1$, we have $|P_\ell\setminus (P'_\ell\cup \{e'\})|\geqslant 3r-3n-1-(2r-2n-2)-1=r-n $. Finally, move $r-n$ arbitrary edges from $P_\ell\setminus (P'_\ell\cup \{e'\})$  to $P_i$. Choosing such edges guarantees that $P_i$ induces a linear forest with $2r-2n-1$ edges. 

Finally, for each $i$, $t+1\leqslant i\leqslant n$, do the following process. First, note that since $t\leqslant n-1$, we have $\epsilon=0$. 
First, consider $P_j=P_{i+t-2s}$. Also, let $e'$ be the unique edge of $H''$ in $P_j$. We choose an arbitrary subset $P'_j\subseteq P_j\setminus\{e'\}$ such that $|P'_j|=r-n-1$. This can be done since $r-n-1\leqslant 3r-3n-1$. Now, move all $r-n-1$ edges in $ P'_j$ to $P_i$. 
Then, $P_i$ is now a linear forest in $G$ with $r-n-1$ edges. Let $I_i$ and $J_i$ be respectively the set of all inner vertices and endpoints of the paths in $G[P_i]$.

Now, consider $P_\ell= P_{i+n-2s}$ and let $e'$ be the unique edge of $H''$ in $P_\ell$. 
Now, let $P_\ell'$ be a subset of $P_\ell$ containing all edges incident with $I_i$ along with at most one edge incident with each vertex in $J_i$ such that if $z,w$ are two endpoints of a path in $G[P_i]$ then $z$ and $w$ are in different connected components of $G[P_\ell \setminus P_\ell']$. Since $P_\ell$ induces a linear forest, we have $|P_\ell'|\leqslant 2|I_i|+|J_i|=2|P_i|=2(r-n-1)$. Therefore, since $|P_\ell|\geqslant 3r-3n$, we have $|P_\ell\setminus (P'_\ell\cup \{e'\})|\geqslant 3r-3n-(2r-2n-2)-1=r-n+1 $. Finally, move $r-n+1$ arbitrary edges from $P_\ell\setminus (P'_\ell\cup \{e'\})$  to $P_i$. Choosing such edges guarantees that $P_i$ induces a linear forest with $2r-2n$ edges.

At the end of this process, for each $i\in [s+1,t]$, $|P_i|=2r-2n-1$ and for each $i\in [t+1,n]$, $|P_i|=2r-2n$. Also, for each $j\in [1,2n-2s]$, we have moved from $P_j$ at most $r-n$ edges whenever $t=n$ and at most $r-n+1$ edges whenever $t<n$. Therefore, at most $r-n-\epsilon+1$ edges are moved from $P_j$. Thus,  $|P_j|\geqslant 3r-3n-\epsilon -(r-n-\epsilon+1)=2r-2n-1$. Finally, for each $i\in [2n-2s+1,s]$, $|P_i|\geqslant (r-2)-e(H'\setminus H'')= r-2-t+s\geqslant 2r-2n-1$ as $n\geqslant t$ and $s\geqslant r/2$. Hence, $\mathcal{P}=(P_1,\ldots, P_n)$ satisfies the conditions of the theorem. This completes the proof.
\end{proof}

\section{Embedding Components Isomorphic to $K_2$} \label{sec:sparse}
Let $H$ be a minimal counterexample to Conjecture~\ref{conj:main} with $n$ edges. 
Suppose that $H=H'\cup (n-t)K_2$, where $H'$ is the union of all connected components of $H$ non-isomorphic to $K_2$ and let $t$ and $r$ be respectively the number of edges and vertices of $H'$. In Theorem~\ref{thm:dense}, we proved that $H'$ can be embedded into an edge decomposition of $K_r$ with certain properties. In this section, we are going to prove that such decomposition can be extended to a \hcd\ of $K_{2n+1}$ in which $H$ is rainbow and so Conjecture~\ref{conj:main} is settled completely. In fact, we prove the following theorem.

\begin{theorem} \label{thm:k2}
	Let $G$ be a graph with $t$ edges and $r$ vertices, where $G\not\cong tK_2 $ and also let $n\geqslant t$ be an integer. Suppose that $K_{r}$ admits an edge decomposition $\mathcal{P}=(P_1,\ldots, P_n)$ such that 
	\begin{itemize}
		\item for each $i\in [n]$, $P_i$ induces a linear forest in $K_{r}$,
		\item  $G$ is rainbow in $P_1,\ldots, P_t$ and
		\item for each $i\in [t]$, $|P_i|\geqslant 2r-2n-1$ and for each $i\in [t+1,n]$, $|P_i|\geqslant 2r-2n $.  
	\end{itemize}
	Then, $\mathcal{P}$ can be extended to a \hcd\ of $K_{2n+1}$ in which the graph $G\cup (n-t)K_2$ is rainbow.
\end{theorem}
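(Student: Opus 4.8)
The plan is to extend $\mathcal P$ in a single step to a \hcd\ of $K_{2n+1}$ while \emph{steering} the new edges that lie among the adjoined vertices, so that each of the $n-t$ ``light'' classes $P_{t+1},\dots,P_n$ acquires a private edge that can serve as a $K_2$-component. Write $V_0=V(K_{r})$ and let $W$, with $|W|=2n+1-r$, be the set of vertices adjoined when passing to $K_{2n+1}$. Since $G\cup(n-t)K_2$ must fit inside $K_{2n+1}$ we have $r+2(n-t)\le 2n+1$, so $|W|\ge 2(n-t)$ and we may fix $n-t$ pairwise disjoint pairs $\{x_j,y_j\}\subseteq W$. In any \hcd\ $(C_1,\dots,C_n)$ extending $\mathcal P$ one has $C_i\cap E(K_{r})=P_i$ (because $\mathcal P$ decomposes $K_r$), so $G$ is automatically rainbow in $C_1,\dots,C_t$. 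By the vertex-transitivity of $K_{2n+1}$ we are free to choose where the copy of $(n-t)K_2$ sits, so it suffices to produce such an extension with the extra property that $x_jy_j\in C_{t+j}$ for each $j$: then $G$ together with $x_1y_1,\dots,x_{n-t}y_{n-t}$ is a rainbow copy of $G\cup(n-t)K_2$.

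By Corollary~\ref{cor:hilton}, $\mathcal P$ already extends to \emph{some} \hcd, since each $P_i$ is a linear forest with $|P_i|\ge 2r-2n-1$. A degree count moreover shows that the number of edges of $C_i$ lying inside $W$ is \emph{forced} to be $a_i=|W|-(r-|P_i|)=2n+1-2r+|P_i|$, independently of the extension; the size hypotheses give $a_i\ge 1$ for each light class $i>t$ and $a_i\ge 0$ otherwise, and these $W$-internal edges, taken over all classes, partition $E(K_{W})$ into linear forests of sizes $a_i$. Thus every light class \emph{must} contain at least one edge inside $W$, and the extra ``$+1$'' in the light-class hypothesis is exactly what creates the room to host a $K_2$-edge; the remaining task is to make these forced edges a pairwise disjoint, prescribable family.

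Accordingly, I would prescribe the extension rather than take an arbitrary one. First choose a decomposition of $K_{W}$ into linear forests $F_1,\dots,F_n$ with $|F_i|=a_i$ and $x_jy_j\in F_{t+j}$; this is possible since the $x_jy_j$ are disjoint, each light $a_i\ge 1$, and $a_i\le |W|-1$ (equivalently $|P_i|\le r-1$). Writing $c_i=r-|P_i|=|W|-a_i$ for the common number of components of $P_i$ and of $F_i$, one then distributes the edges of the complete bipartite graph between $V_0$ and $W$ into sets $B_1,\dots,B_n$ with $|B_i|=2c_i$ so that, for each $i$, the connecting edges of $B_i$ splice the $c_i$ paths of $P_i$ and the $c_i$ paths of $F_i$ alternately into a single spanning cycle $C_i$. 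The global degree bookkeeping is consistent: each $v\in V_0$ sends exactly $|W|$ edges to $W$ and each $w\in W$ sends exactly $r$ edges to $V_0$, matching $\sum_i\bigl(2-\deg_{P_i}(v)\bigr)=2n-(r-1)$ and $\sum_i\bigl(2-\deg_{F_i}(w)\bigr)=2n-(|W|-1)$. Then $(C_1,\dots,C_n)$ is a genuine \hcd\ with $P_i,F_i,B_i\subseteq C_i$, and in particular $x_jy_j\in C_{t+j}$, as required.

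The main obstacle is this last distribution: producing a single valid system $(B_1,\dots,B_n)$ that simultaneously decomposes the bipartite graph between $V_0$ and $W$, realizes the correct degree at every vertex in every class, and closes each class into \emph{one} Hamiltonian cycle rather than several disjoint cycles, all compatibly with the already-fixed $P_i$ and $F_i$. This connectivity-under-prescription is precisely the detachment/amalgamation content underlying Theorem~\ref{thm:hilton}, and the delicate point is the degree-and-parity accounting that forces each $C_i$ to be connected. I would carry it out by adapting the amalgamation argument of \cite{hilton}, with the reserved edges $x_jy_j$ playing the role of prescribed loops at the amalgamated vertex representing $W$, so that the conditions to be checked reduce once more to the linear-forest and size bounds already guaranteed by the hypotheses.
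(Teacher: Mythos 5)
Your setup is sound and several of your observations are correct and genuinely illuminating: the forced count $a_i=2n+1-2r+|P_i|$ of $C_i$-edges inside $W$, the fact that the extra ``$+1$'' in the light-class hypothesis forces $a_i\geqslant 1$ for $i>t$, and the consistency of all the degree bookkeeping. But the proof has a genuine gap, and it sits exactly where you flag it yourself. The statement you ultimately need --- that one can simultaneously choose bipartite edge sets $B_1,\ldots,B_n$ partitioning $E(K_{V_0,W})$ so that each $B_i$ splices the prescribed linear forests $P_i$ (on $V_0$) and $F_i$ (on $W$) into a \emph{single} spanning cycle --- is not a corollary of anything available in the paper. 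Theorem~\ref{thm:hilton}/Corollary~\ref{cor:hilton} only govern extensions of a decomposition of one clique $K_r$; they say nothing about extensions in which the edges inside $W$ are also prescribed in advance, and ``adapting the amalgamation argument'' is precisely the hard work, not a routine modification: one must control, for every class at once, both the degree constraints and the connectivity of the spliced cycle (the interleaving of the $c_i$ segments on each side must form one cycle, not several), while every bipartite edge is used exactly once. In addition, your intermediate claim that $K_W$ admits a decomposition into linear forests $F_1,\ldots,F_n$ with the prescribed sizes $a_i$ \emph{and} with the prescribed disjoint edges $x_jy_j\in F_{t+j}$ is asserted but not proved; prescribed-size linear-forest decompositions of complete graphs with forced edges are not automatic.

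For comparison, the paper avoids the single-step prescription entirely. It adjoins the vertices of $W$ two at a time: at stage $s$ it builds an auxiliary bipartite multigraph recording, for each class $Q_i$ and each current vertex, whether that vertex is a path-endpoint or isolated in $Q_i$, and then extracts two carefully constrained subgraphs $G_1,G_2$ (via de~Werra's balanced edge colorings, the pairing Lemma~\ref{lem:hilton}, and the degree-shifting Lemma~\ref{lem:reduction}) that dictate how the two new vertices attach. The single new edge between the two added vertices is placed into the next unused class, providing that class's $K_2$, and the invariants (each class a linear forest, sizes as in the induction hypothesis) are maintained so that Corollary~\ref{cor:hilton} can be applied once at the very end. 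If you want to salvage your one-shot approach, you would need to prove a two-sided extension theorem generalizing Hilton's result; as written, your argument reduces the theorem to an unproved statement that is at least as strong as the theorem itself.
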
    

In order to prove Theorem~\ref{thm:k2}, we begin with the given decomposition $\mathcal{P}$ of $K_r$ and apply an idea by Hilton \cite{hilton} to  add vertices of $K_{2n+1}\setminus K_r$ one by one and we try to construct a rainbow matching in the remaining classes $P_i$, $t+1\leqslant i\leqslant n$. For this, we need a couple of lemmas.  First, let us review some definitions.

For an edge coloring of a multigraph $G$ with colors $1,\ldots, k$, let $C_i(v)$ be the set of edges of color $i$ incident with $v$ and let $C_i(u,v)$ be the set of edges of color $i$ with endpoints $u,v$. An edge coloring of $G$ is called a balanced edge coloring if for every vertex $v\in V(G)$,
\begin{equation*}
\max_{1\leqslant i< j\leqslant k} \big||C_i(v)|-|C_j(v)|\big|\leqslant 1,
\end{equation*}
and for every pair of vertices $u,v\in V(G)$,
\begin{equation*}
\max_{1\leqslant i< j\leqslant k} \big||C_i(u,v)|-|C_j(u,v)|\big|\leqslant 1.
\end{equation*}

We need the following result due to de Werra.
\begin{lemma} \label{lem:dewerra}{ \rm \cite{dewerra1,dewerra2,dewerra3}}
	For every bipartite multigraph $G$ and every positive integer $k$, $G$ admits a balanced $k$-edge coloring. 
\end{lemma}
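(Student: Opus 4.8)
The plan is to prove the lemma by induction on the number of colors $k$, using the case $k=2$ as the engine and reducing the inductive step to the extraction of a single ``fair'' color class. The base case $k=1$ is trivial. For the step I would first produce a class $F\subseteq E(G)$ (to play the role of color $k$) that takes its fair $1/k$ share everywhere, in the sense that
\begin{align*}
\deg_F(v)\in\Big\{\big\lfloor \tfrac{\deg_G(v)}{k}\big\rfloor,\big\lceil \tfrac{\deg_G(v)}{k}\big\rceil\Big\}\ \text{for all }v,\qquad
|F\cap E(u,v)|\in\Big\{\big\lfloor \tfrac{m_{uv}}{k}\big\rfloor,\big\lceil \tfrac{m_{uv}}{k}\big\rceil\Big\}\ \text{for all }u,v,
\end{align*}
where $m_{uv}$ is the number of parallel edges between $u$ and $v$. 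I would then color $G\setminus F$ balancedly with $k-1$ colors by the inductive hypothesis and declare $F$ to be color $k$. A short arithmetic check shows this always yields a balanced $k$-coloring: writing $\deg_G(v)=qk+s$ with $0\le s<k$, for \emph{either} admissible value of $\deg_F(v)$ the complementary degree $\deg_G(v)-\deg_F(v)$ splits into $k-1$ nearly-equal parts all lying in $\{q,q+1\}$, so all $k$ color-degrees at $v$ differ by at most $1$; the identical computation with $m_{uv}$ in place of $\deg_G(v)$ handles the pair condition. Thus the whole lemma reduces to the \emph{extraction claim} that such an $F$ exists, and, crucially, the check tolerates any fair $F$ rather than requiring coordinated choices across vertices.

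For the extraction I would exploit the decoupling afforded by parallel edges. Between each pair $u,v$ I put $\lfloor m_{uv}/k\rfloor$ of the parallel edges into $F$ ``for free''; this base assignment already respects the pair condition and, since it contributes the same amount to every color at every vertex, it cannot affect the differences the balance condition controls. What remains is a residual multigraph $R$ in which every pair has $r_{uv}=m_{uv}\bmod k<k$ edges, of which $F$ may grab at most one more. Hence the extra edges $S=F\cap R$ form a subgraph of the \emph{simple} bipartite graph $\underline{R}$ underlying $R$, and the only surviving requirement is the degree constraint $\deg_S(v)\in\{\lfloor \deg_R(v)/k\rfloor,\lceil \deg_R(v)/k\rceil\}$. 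Since each $r_{uv}\le k-1$ gives $\lceil \deg_R(v)/k\rceil\le \deg_{\underline{R}}(v)$, these targets are degree-feasible, and the existence of such a degree-constrained subgraph of a bipartite graph follows from a standard max-flow / Hoffman-circulation argument (the relevant constraint matrix is totally unimodular, so fractional feasibility forces an integral solution).

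When $k=2$ this residual step is the classical picture and can be carried out by hand, which I would present as the base engine to keep the method transparent. There $r_{uv}\in\{0,1\}$, so $R=\underline{R}$ is simple and the target is $\deg_S(v)\in\{\lfloor \deg_R(v)/2\rfloor,\lceil \deg_R(v)/2\rceil\}$. I would realize $S$ by decomposing $\underline{R}$ into trails whose endpoints are exactly its odd-degree vertices (plus one Eulerian circuit in each component of all even degrees) and coloring the edges of each trail alternately ``in $S$ / out of $S$''. \textbf{Here bipartiteness is essential}: in a bipartite graph every closed trail has even length, so the alternation is consistent around it and every vertex internal to the trails receives exactly half of its incident residual edges; the only possible excess of $1$ occurs at a trail endpoint, i.e.\ at an odd-degree vertex, each of which is an endpoint of a single trail. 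This gives the required split to within $1$.

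I expect the main obstacle to be precisely the simultaneous control of the vertex condition and the pair condition, which are genuinely coupled across the $k$ colors: a naive alternating-trail swap that repairs a vertex imbalance between two colors also flips the multiplicities of every pair along the trail and disturbs the balance of the other colors. The resolution I am proposing is the matched-parallel-edge decoupling above, which confines all the interesting imbalance to the simple residual graph $\underline{R}$ — where the pair condition is automatic (at most one edge per pair) and only a degree condition survives — together with the inductive reduction, which sidesteps handling all $k$ colors at once. The remaining work is the flow-feasibility verification for the general-$k$ extraction, which I regard as routine given total unimodularity.
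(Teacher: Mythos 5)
Your proof is essentially correct, but note for calibration that the paper contains no proof of Lemma~\ref{lem:dewerra} at all: it is imported as a black box from de Werra's papers, and the only balanced-coloring argument the paper writes out is the appendix proof of the related Lemma~\ref{lem:hilton}, a $2$-coloring with pairing constraints built by growing alternating trails through the even-degree side. Your $k=2$ engine is exactly that trail-alternation idea (open trails between odd-degree vertices plus even closed trails, edges alternately in and out of $S$, bipartiteness guaranteeing consistency around closed trails), so at $k=2$ the two arguments coincide in spirit. What you add is the reduction of general $k$ to peeling off one fair class $F$ and inducting, with the parallel-edge decoupling confining the pair condition to the simple residual graph $\underline{R}$, where only a degree-window constraint survives. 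The inductive arithmetic is sound: $k-1$ integers pairwise within $1$ of each other summing to $d$ must all lie in $\{\lfloor d/(k-1)\rfloor,\lceil d/(k-1)\rceil\}$, and your case analysis then places all $k$ classes within $1$ of one another, both for vertex degrees and for pair multiplicities. Your route buys a self-contained proof where the paper has only a citation, at the price of invoking integrality machinery instead of purely combinatorial trail surgery.

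One patch is needed in the extraction step. The inequality $\lceil \deg_R(v)/k\rceil\leqslant\deg_{\underline{R}}(v)$ is only a necessary condition: degree-window problems can be infeasible even when every window sits below the degree (take the path $x_1yx_2$ with all three windows equal to $\{1\}$). What actually makes your argument work is the explicit fractional witness $x_{uv}=r_{uv}/k\in[0,1]$, whose degree sum at every $v$ is exactly $\deg_R(v)/k$, hence inside $[\lfloor\deg_R(v)/k\rfloor,\lceil\deg_R(v)/k\rceil]$; total unimodularity of the bipartite incidence matrix together with integral box and window bounds then yields an integral point, i.e.\ the desired $S$. With that one line inserted the extraction claim, and hence the lemma, is fully proved; alternatively, the flow step can be replaced by the same trail-alternation you use at $k=2$, applied in an Euler-split fashion as in de Werra's original arguments.
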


 Let $G$ be a bipartite multigraph with bipartition $(X,Y)$. By a \textit{pairing} of $G$ on $X$, we mean a collection $\{\mathcal{E}_x: x\in X\}$ such that each $\mathcal{E}_x$ is a family of disjoint two-subsets of the edges incident with $x$. If $\{e_1,e_2\}\in \mathcal{E}_x$, then we say $e_1$ and $e_2$ form an $x$-pair. We have also an additional condition that if there are more than one edge between two vertices $x$ and $y$, then at most one of these parallel edges is paired with an edge $xy'$, $y'\neq y$. 
 We also need the following lemma which can be derived from the proof of Theorem~1 in \cite{hilton}. 
 For the completeness, we provide a proof of the following lemma in Appendix~\ref{sec:app}. 
 
\begin{lemma} {\rm \cite{hilton}}\label{lem:hilton}
Let $F$ be a bipartite multigraph with bipartition $(X,Y)$ such that the degree of each vertex in $Y$ is even. Also, consider a pairing of $F$ on $X$. Then, $F$ has a balanced edge-coloring with two colors such that if $e_1,e_2\in E(F)$ form an $x$-pair for some $x\in X$, then $e_1$ and $e_2$ have different colors.   
\end{lemma}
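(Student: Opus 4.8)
The plan is to prove Lemma~\ref{lem:hilton} by reducing the pairing constraint to a statement about balanced colorings of an auxiliary bipartite multigraph, to which Lemma~\ref{lem:dewerra} can be applied directly. The key observation is that an $x$-pair is precisely a constraint forcing two specified edges at $x$ to receive opposite colors; since we are using only two colors, ``opposite colors'' means the two edges of a pair must lie in different color classes. I would encode each pair as a single new vertex: construct a new bipartite multigraph $F'$ from $F$ as follows. For every $x\in X$ and every $x$-pair $\{e_1,e_2\}\in\mathcal{E}_x$, where $e_1=xy_1$ and $e_2=xy_2$, remove the two edges $e_1,e_2$, introduce a fresh vertex $p$ on the $Y$-side, and add the path $y_1\, p\, y_2$ (that is, two edges $py_1$ and $py_2$). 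The edges at $x$ that belong to no pair are left untouched. The resulting graph $F'$ is still bipartite, with $X$ on one side and $Y$ together with all the new pair-vertices on the other.

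The next step is to verify the parity hypothesis of Lemma~\ref{lem:dewerra} is not what we need directly; rather, we want a balanced $2$-edge-coloring of $F'$ and then must check that pulling it back to $F$ both respects the pairing and stays balanced. Each new pair-vertex $p$ has degree exactly $2$ in $F'$, so in any balanced $2$-coloring the two edges $py_1,py_2$ receive one color each, i.e.\ different colors; translating back, $e_1$ and $e_2$ get different colors, which is exactly the pairing requirement. For the balance conditions, I would argue that the color counts $|C_i(v)|$ at each original vertex $v\in V(F)$ are preserved under the correspondence between edges of $F$ and edges of $F'$ (an $x$-pair $\{e_1,e_2\}$ contributes one edge of each color at $x$ both before and after the rewriting, and contributes the colors of $py_1,py_2$ at $y_1,y_2$, matching the colors we assign to $e_1,e_2$), so vertex-balance in $F'$ yields vertex-balance in $F$. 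The pair condition on $C_i(u,v)$ needs the extra hypothesis in the definition of a pairing: parallel edges between $x$ and $y$ are not both paired with edges going elsewhere, which ensures the multiplicities $|C_i(u,v)|$ translate correctly and are not disturbed by the surgery.

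The main obstacle I expect is the balance bookkeeping for pairs of vertices, not the color-forcing, which is immediate. Specifically, when several parallel edges join $x$ and $y$ in $F$, the rewriting may route some of them through pair-vertices (connecting $x$ to $y$ through a new $p$ of degree $2$) while leaving others as direct edges, and I must confirm that the induced coloring still satisfies $\max_{i<j}\big||C_i(x,y)|-|C_j(x,y)|\big|\le 1$. This is exactly where the ``at most one parallel edge is paired with an edge $xy'$'' condition is used: it limits how the parallel class between $x$ and $y$ can be split by the pairing, so that the two-color balance obtained from $F'$ does not accidentally pile up in a single color class on the $xy$ multi-edge. I would also need to confirm that the hypothesis that every vertex of $Y$ has even degree in $F$ is consistent with (and, where needed, transferred to) $F'$; since the even-degree requirement is what ultimately guarantees a genuinely balanced split at the $Y$-vertices, I would track degrees through the construction and invoke Lemma~\ref{lem:dewerra} on $F'$ to obtain the desired coloring, then pull it back. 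Once these balance checks are dispatched, the lemma follows. A careful direct argument following the coloring construction in the proof of Theorem~1 of~\cite{hilton} is given in Appendix~\ref{sec:app}.
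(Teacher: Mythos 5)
Your reduction is a genuinely different route from the paper's proof: the paper argues directly, by repeatedly extracting from $F$ an even path or cycle whose edges can be colored alternately while respecting the pairing, and then inducting on the number of edges. The core of your idea is sound as far as it goes: splitting each $x$-pair off onto a new degree-$2$ vertex forces the two edges of the pair into different classes of any balanced $2$-coloring, and both the vertex-balance at each $y\in Y$ and at each $x\in X$ pull back correctly, since every pair contributes exactly one edge of each color at $x$. (One slip: the new vertex $p$ is adjacent to $y_1,y_2\in Y$, so it must be placed on the $X$-side of the bipartition, not the $Y$-side as you wrote; otherwise $F'$ is not bipartite with the bipartition you describe and Lemma~\ref{lem:dewerra} does not apply.)

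The genuine gap is the pair-of-vertices balance condition $\bigl||C_1(x,y)|-|C_2(x,y)|\bigr|\leqslant 1$, which you identify as the main obstacle but do not actually close. The hypothesis that at most one of the parallel $xy$-edges is paired with an edge $xy'$, $y'\neq y$, does not suffice. Concretely, take two parallel edges $e_1,e_2$ between $x$ and $y$, where $e_1$ is unpaired and $e_2$ is paired with an edge $xy'$; this pairing satisfies the stated condition. In $F'$, $e_1$ remains a direct $xy$-edge while $e_2$ becomes an edge $py$ at a fresh pair-vertex $p$. De Werra's balance constrains $C_i(x,y)$ and $C_i(p,y)$ separately, and nothing correlates the two colors, so both $e_1$ and $e_2$ may receive color $1$, giving $|C_1(x,y)|=2$ and $|C_2(x,y)|=0$ when you pull back to $F$. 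The same failure occurs whenever the $xy$-multiedge contains both an externally paired edge and at least one unpaired edge (internally paired $xy$-edges are harmless, since each such pair contributes one edge of each color). To repair the argument you would need some additional mechanism tying the color of the split-off edge $py$ to the colors of the remaining direct $xy$-edges, or a strengthening of Lemma~\ref{lem:dewerra} that treats the edges from $p$ and from $x$ to $y$ as a single multiedge for the purposes of the second balance condition. As written, the reduction delivers the vertex-balance and the pair-separation, but not the full balanced coloring that the lemma asserts, which is why the paper instead runs the explicit alternating path/cycle construction.
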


Finally, we need the following lemma.
\begin{lemma} \label{lem:reduction}
Suppose that $G$ is a bipartite multigraph with bipartition $(X,Y)$ whose edge set is partitioned into two sets $A,B$ such that for every $y\in Y$, $\deg_B(y)\geqslant \deg_A(y)$ and for every $x\in X$, $\deg_A(x), \deg_B(x)\leqslant \eta$ for some integer $\eta$. Also, suppose that for some vertex $x_0\in X$, we have either
\begin{itemize}
	\item $\deg_A(x_0)> \deg_B(x_0)$, or
	\item $\deg_A(x_0)= \deg_B(x_0)$ is odd, $\eta$ is even and for every $y\in Y$, $\deg_B(y)$ is even.
\end{itemize}   Then, there is a subset $C\subset E(G)$ such that 

\begin{itemize}
	\item $\deg_{C}(y)=\deg_A(y)$, for all $y\in Y$,
	\item $\deg_C(x_0)=\deg_A(x_0)-1$, and
	\item $\deg_A(x)\leqslant \deg_C(x)\leqslant \eta $, for all $x\in X\setminus \{x_0\}$.
\end{itemize}
\end{lemma}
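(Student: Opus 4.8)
The plan is to start from the subgraph $C=A$ and repair the single incorrect degree by one alternating augmenting path. Observe that $C=A$ already satisfies $\deg_C(y)=\deg_A(y)$ for all $y\in Y$, and $\deg_A(x)\leqslant\deg_C(x)=\deg_A(x)\leqslant\eta$ for all $x\in X$; the only defect is that $\deg_C(x_0)=\deg_A(x_0)$ rather than $\deg_A(x_0)-1$. So it suffices to find an alternating walk that deletes one $A$-edge at $x_0$, reroutes through $Y$ using $B$-edges so that no $Y$-degree changes, and terminates at an $X$-vertex with spare capacity. To formalize this I build an auxiliary digraph $D$ on $X\cup Y$ with an arc $x\to y$ for each edge $xy\in A$ and an arc $y\to x$ for each edge $xy\in B$. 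Since $\deg_A(x_0)\geqslant 1$ under both hypotheses (as $\deg_A(x_0)>\deg_B(x_0)\geqslant 0$, or $\deg_A(x_0)=\deg_B(x_0)$ is odd), $x_0$ has an outgoing arc.

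Let $R=R_X\cup R_Y$ be the set of vertices reachable from $x_0$ in $D$, where $R_X=R\cap X$ and $R_Y=R\cap Y$. By closure of reachability, $N_A(x)\subseteq R_Y$ for every $x\in R_X$ and $N_B(y)\subseteq R_X$ for every $y\in R_Y$. Writing $a(S)=\sum_{v\in S}\deg_A(v)$ and $b(S)=\sum_{v\in S}\deg_B(v)$, this gives $a(R_X)=e_A(R_X,R_Y)\leqslant a(R_Y)$ and $b(R_Y)=e_B(R_X,R_Y)\leqslant b(R_X)$; together with the hypothesis $\deg_B(y)\geqslant\deg_A(y)$ summed over $R_Y$, I obtain the chain
\[
b(R_X)\ \geqslant\ b(R_Y)\ \geqslant\ a(R_Y)\ \geqslant\ a(R_X).
\]
I then argue by contradiction: suppose no valid terminus exists, i.e.\ every $x\in R_X\setminus\{x_0\}$ is saturated with $\deg_A(x)=\eta$. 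Then $a(R_X)=\deg_A(x_0)+(|R_X|-1)\eta$, while $b(R_X)\leqslant\deg_B(x_0)+(|R_X|-1)\eta$ since $\deg_B(x)\leqslant\eta$. Combined with $b(R_X)\geqslant a(R_X)$ from the chain, this forces $\deg_B(x_0)\geqslant\deg_A(x_0)$, which immediately contradicts the first hypothesis $\deg_A(x_0)>\deg_B(x_0)$.

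Under the second hypothesis the same computation yields only $\deg_B(x_0)\geqslant\deg_A(x_0)$, consistent with the assumed equality, so I invoke a parity argument. The equality $\deg_A(x_0)=\deg_B(x_0)$ collapses the entire chain to equalities, whence $a(R_Y)=a(R_X)$ and $\deg_B(y)=\deg_A(y)$ for every $y\in R_Y$; since each $\deg_B(y)$ is even, $a(R_Y)$ is even, whereas $a(R_X)=\deg_A(x_0)+(|R_X|-1)\eta$ is odd (odd plus even, because $\eta$ is even), a contradiction. Hence in both cases some $x^*\in R_X\setminus\{x_0\}$ has $\deg_A(x^*)<\eta$. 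Finally I take a shortest, hence vertex-simple, directed $x_0$--$x^*$ path in $D$; as $D$ carries only $X\to Y$ arcs ($A$-edges) and $Y\to X$ arcs ($B$-edges), this path alternates types and ends with a $B$-arc into $x^*$. Setting $C$ to be the symmetric difference of $A$ with the set of edges traversed, a routine check shows $\deg_C(x_0)=\deg_A(x_0)-1$, $\deg_C(x^*)=\deg_A(x^*)+1\leqslant\eta$, every intermediate $X$-degree and every $Y$-degree is unchanged, and all untouched vertices keep $\deg_C=\deg_A$, giving the desired $C$. I expect the main obstacle to be precisely this contradiction step: the crude counting delivers only $\deg_B(x_0)\geqslant\deg_A(x_0)$, so the boundary case $\deg_A(x_0)=\deg_B(x_0)$ genuinely requires the extra evenness hypotheses, and making the reachable-set inequalities tight enough to license the parity argument is the delicate part.
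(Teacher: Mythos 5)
Your proof is correct and takes essentially the same route as the paper's: the reachable set $R_X$ in your digraph is exactly the paper's set $\widetilde{X}$ of vertices reachable by alternating paths starting with an $A$-edge, the double-counting chain $b(R_X)\geqslant b(R_Y)\geqslant a(R_Y)\geqslant a(R_X)$ mirrors the paper's comparison of $e_A(\widetilde{X},\widetilde{Y})$ and $e_B(\widetilde{X},\widetilde{Y})$, and the parity contradiction in the boundary case $\deg_A(x_0)=\deg_B(x_0)$ is the same (the paper reads the even parity off $\sum_{y}\deg_B(y)$ directly rather than via $a(R_Y)$, a cosmetic difference). The only stylistic divergence is your explicit symmetric-difference augmentation along a shortest directed path, which is the paper's $C=(A\setminus P_x)\cup(P_x\setminus A)$ in different notation.
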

\begin{proof}
Let $\widetilde{X}$ be the set of all vertices $x\in X$ such that there is an alternating path $P_x$ from $x_0$ to $x$ with edges in $A$ and $B$ alternately starting from an edge in $A$ (note that $x_0\in \widetilde{X}$).

 If for some $x\in \widetilde{X}\setminus \{x_0\}$, $\deg_A(x) < \eta$, then we set $C=(A\setminus P_x)\cup (P_x\setminus A)$ which satisfies all the conditions. Now, suppose that for all $x\in \widetilde{X}\setminus\{x_0\}$, $\deg_A(x)= \eta\geqslant \deg_B(x)$. Let $\widetilde{Y}=N_A(\widetilde{X})$ and so $e_A(\widetilde{X},\widetilde{Y})\geqslant e_B(\widetilde{X},\widetilde{Y})$. On the other hand, 
it is clear that $N_B(\widetilde{Y}) \subseteq \widetilde{X}$ and since  for every $y\in Y$, $\deg_B(y)\geqslant \deg_A(y)$, we have $e_B(\widetilde{X},\widetilde{Y}) \geqslant e_A(\widetilde{X},\widetilde{Y})$. If $\deg_A(x_0)> \deg_B(x_0)$, then we have $e_A(\widetilde{X},\widetilde{Y}) > e_B(\widetilde{X},\widetilde{Y})$ which is a contradiction. Now, suppose that  $\deg_A(x_0)=\deg_B(x_0)$ is odd, $\eta$ is even and for every $y\in Y$, $\deg_B(y)$ is even. Then, $e_B(\widetilde{X},\widetilde{Y}) = e_A(\widetilde{X},\widetilde{Y})=\eta|\widetilde{X}|-\eta+\deg_A(x_0)$ is odd and $e_B(\widetilde{X},\widetilde{Y})=\sum_{y\in \widetilde{Y}} \deg_B(y)$ is even which is again a contradiction. This contradiction completes the proof.
\end{proof}

Now, we are ready to prove Theorem~\ref{thm:k2}.

\begin{proof}[Proof of Theorem~\ref{thm:k2}.]
We start with the edge decomposition $\mathcal{P}$ of $K_r$ with the described properties and for each $s\in[0,n-t]$, we are going to prove that $\mathcal{P}$ can be extended to a decomposition  $\mathcal{Q}=(Q_1,\ldots, Q_n)$ of $K_{2s+r}$ such that $G\cup sK_2$ is rainbow in $Q_1,\ldots, Q_{t+s}$ and
\begin{equation} \label{eq:Q}
|Q_i|\geqslant 
\begin{cases}
4s+2r-2n-1 & i\in [t+s],\\
4s+2r-2n & i\in [t+s+1,n].
\end{cases}
\end{equation}
For $s=0$ we have $\mathcal{Q}=\mathcal{P}$. Suppose that it has been done until some  $s< n-t$ and we do it for $s+1$.

Set $k=2n-2s-r+1$ and suppose that the vertex set of $K_{2s+r}$ is $\{u_1,\ldots, u_{2s+r}\}$.
We define an auxiliary bipartite multigraph $\widetilde{G}$ with a bipartition $X=\{u_1,\ldots,u_{2s+r}\}$ and $Y=\{c_1,\ldots,c_n\}$ such that $c_i$ is adjacent to $u_j$ with an edge if $u_j$ is an endpoint of a path in $Q_i$ and $c_i$ is adjacent to $u_j$ with two parallel edges if $u_j$ is incident with no edge in $Q_i$.
Then, we have
\begin{align}
\deg_{\widetilde{G}}(u_j)&=k, & \forall \, j\in[2s+r], \label{eq:uj} \\
\deg_{\widetilde{G}}(c_i)&=4s+2r-2|Q_i|,  &\forall \, i\in[n]. \label{eq:ci}
\end{align}
To see \eqref{eq:uj}, for each $\ell\in\{0,1,2\}$, let $n_\ell$ be the number of sets $Q_i$ where $\deg_{Q_i}(u_j)=\ell $. Therefore, $\deg_{\widetilde{G}}(u_j)=2n_0+n_1=2(n-n_1-n_2)+n_1=2n-n_1-2n_2$. On the other hand, $u_j$ is incident with $2s+r-1$ edges in $K_{2s+r}$. So, $n_1+2n_2=2s+r-1$ and we have  $\deg_{\widetilde{G}}(u_j)= 2n-2s-r+1=k$. 

To see \eqref{eq:ci}, note that if $Q_i$ is empty, then $\deg_{\widetilde{G}}(c_i)=2(2s+r)$. Now, adding any edge to $Q_i$ reduces the degree of $c_i$ in $\widetilde{G}$ by two (because of its endpoints). Thus,  
$\deg_{\widetilde{G}}(c_i)=4s+2r-2|Q_i|$.

Therefore, by \eqref{eq:Q}, for each $i\in [s+t]$, we have $\deg_{\widetilde{G}}(c_i)\leqslant 2k$ and for each $i\in [s+t+1,n]$, $\deg_{\widetilde{G}}(c_i)\leqslant 2k-2$. 
Now, we prove the following claim.

\begin{claim} \label{claim:1}
There exist two disjoint subgraphs $G_1,G_2$ of $\widetilde{G}$ such that for $\ell=1,2$, 

\renewcommand{\theenumi}{\roman{enumi}}
\begin{enumerate}
	\item  for each $j\in[2s+r]$, $\deg_{G_\ell}(u_j)=1$,
	\item \label{con2} for each $i\in [n]$, $\deg_{G_\ell}(c_i)\leqslant 2$ and $\deg_{G_\ell}(c_{s+t+1})\leqslant 1$,
%
		\item\label{con-deg} for each $i\in[n]$, if $\deg_{\widetilde{G}}(c_i)= 2k-2x$, for some  integer $x$, then $$\deg_{G_1}(c_i)+\deg_{G_2}(c_i)\geqslant 
	\begin{cases}
	4-x & 1\leqslant i\leqslant s+t, \\
	3-x & i=s+t+1,\\
	5-x & s+t+2\leqslant i\leqslant n.
	\end{cases}
	$$
%
	
	\item \label{con7} for each $i\in [n]$, if $c_i$ is adjacent to some vertices $u_{j_{_1}}$ and $u_{j_{_2}}$ in $G_\ell$, then $u_{j_{_1}}$ and $u_{j_{_2}}$ are not the endpoints of a path in $Q_i$,
	\item \label{con8} if $c_{s+t+1}$ is adjacent to some vertex $u_{j_{_1}}$ in $G_1$ and $u_{j_{_2}}$ in $G_2$, then $u_{j_{_1}}$ and $u_{j_{_2}}$ are not the endpoints of a path in $Q_{s+t+1}$. Also, $G_1\cup G_2$ has no parallel edges. 
\end{enumerate}
\end{claim}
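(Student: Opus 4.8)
The plan is to separate conditions (i), (ii), (iv), (v) from condition (iii) by reducing the whole claim to a single \emph{balanced $2$-edge-colouring} of a suitable subgraph $M\subseteq\widetilde G$, and then to build $M$ itself by a degree-constrained-subgraph argument resting on Lemmas~\ref{lem:dewerra} and~\ref{lem:reduction}.

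\textbf{Step 1 (reduction to a balanced colouring).} I would look for a \emph{simple} subgraph $M\subseteq\widetilde G$ (no two parallel edges) with $\deg_M(u_j)=2$ for every $j$, with $\deg_M(c_i)\le 4$ for $i\ne s+t+1$ and $\deg_M(c_{s+t+1})\le 2$, whose $Y$-side degrees already meet the lower bounds of (iii), i.e.\ $\deg_M(c_i)\ge 4-x_i$ (resp.\ $3-x_{s+t+1}$, $5-x_i$) where $\deg_{\widetilde G}(c_i)=2k-2x_i$, and such that the at most two edges of $M$ at $c_{s+t+1}$ are not the two endpoints of a single path of $Q_{s+t+1}$. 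Granting such an $M$, I set $G_1\cup G_2=M$ and split $M$ by Lemma~\ref{lem:hilton} applied with $X=\{c_1,\dots,c_n\}$ and $Y=\{u_1,\dots,u_{2s+r}\}$: every $u_j$ has even degree $2$ in $M$ by~\eqref{eq:uj}, and I pair, at each $c_i$, the two edges running to the two endpoints of a common path of $Q_i$. Since $M$ is simple these pairs are disjoint and the extra parallel-edge clause of the pairing is vacuous, so the lemma produces a balanced $2$-colouring in which paired edges get distinct colours. Declaring $G_\ell$ to be the colour-$\ell$ edges, balancedness on $Y$ forces $\deg_{G_\ell}(u_j)=1$, giving (i); balancedness on $X$ with $\deg_M(c_i)\le 4$ (resp.\ $\le 2$ at $c_{s+t+1}$) forces $\deg_{G_\ell}(c_i)\le 2$ (resp.\ $\le 1$), giving (ii); the pairing gives (iv); and $\deg_{G_1}(c_i)+\deg_{G_2}(c_i)=\deg_M(c_i)$ turns (iii) into exactly the lower bounds I imposed on $M$. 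Finally (v) and the absence of parallel edges in $G_1\cup G_2$ hold by the two properties built into $M$ at $c_{s+t+1}$ (its two edges lie in the two colours and, by construction, are not path-endpoints).

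\textbf{Step 2 (producing $M$).} By~\eqref{eq:uj} every $u_j$ has degree $k$, so I would first apply de Werra's Lemma~\ref{lem:dewerra} to get a balanced $k$-edge-colouring of $\widetilde G$; then each $u_j$ carries exactly one edge of each colour, and each $c_i$, of degree $2k-2x_i$ by~\eqref{eq:ci}, spreads its edges so that no colour appears more than $\lceil\deg_{\widetilde G}(c_i)/k\rceil\le 2$ times. Taking the union $M_0$ of two colour classes then gives $\deg_{M_0}(u_j)=2$ and $\deg_{M_0}(c_i)\le 4$ for free. The global budget is governed by the identity $\sum_i x_i=\binom{k}{2}$, which follows from $\sum_i\deg_{\widetilde G}(c_i)=\sum_j\deg_{\widetilde G}(u_j)=(2s+r)k$ together with $k-1=2n-2s-r$; this is precisely what makes the lower bounds of (iii) summable against the demand $\sum_i\deg_M(c_i)=2(2s+r)$, so no global obstruction exists. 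Wherever $M_0$ fails a local requirement — chiefly $\deg_{M_0}(c_{s+t+1})>2$, but also a parallel edge created by the chosen colours or some $\deg(c_i)$ dropped below its bound — I would repair it by rerouting a single unit of degree along an alternating $A/B$-path, which is exactly the content of Lemma~\ref{lem:reduction} with $\{c_i\}$ in the role of $X$, the offending vertex (e.g.\ $c_{s+t+1}$) in the role of $x_0$, $A$ the current selection and $B$ its complement; the two alternatives in that lemma's hypothesis are handled by a parity split on the relevant degrees.

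\textbf{Main obstacle.} The delicate point is Step 2: meeting, \emph{simultaneously and with $M$ simple}, the cap $\deg_M(c_{s+t+1})\le 2$ with its two edges not spanning one path, the cap $4$ at all other $c_i$, and the lower bounds of (iii) at every $c_i$. I expect the hard part to be showing that the alternating-path corrections can be carried out without ever pushing another $c_i$ above $4$ or below its lower bound — in other words, verifying the Hall/flow-type feasibility behind each invocation of Lemma~\ref{lem:reduction}. This is where the parity hypotheses of Lemma~\ref{lem:reduction} (the even-$\eta$, even-$\deg_B(y)$ case versus the strict-inequality case) must be matched to the parities of $k$ and of the $\deg_{\widetilde G}(c_i)$, and where the bookkeeping, guided throughout by $\sum_i x_i=\binom{k}{2}$, becomes the crux of the argument.
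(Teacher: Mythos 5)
Your Step~1 is sound and is essentially the paper's final move: given a suitable subgraph $M$ with $\deg_M(u_j)=2$, Lemma~\ref{lem:hilton} applied with the path-endpoint pairing (and balancedness) does yield (i), (ii), (iv), (v) and reduces (iii) to lower bounds on $\deg_M(c_i)$. The genuine gap is in Step~2: the union $M_0$ of two colour classes of a balanced $k$-edge-colouring of $\widetilde G$ does \emph{not} meet the lower bounds of (iii). A balanced colouring only guarantees $\deg_{L_q}(c_i)\geqslant\lfloor(2k-2x_i)/k\rfloor$ per class, so for $x_i=1$ two classes may contribute only $1+1=2$ edges at $c_i$ where $4-x_i=3$ (or $5-x_i=4$) is required; similar shortfalls occur for $x_i=2,3$ in the $5-x$ range. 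Your proposed repair via Lemma~\ref{lem:reduction} cannot close this: that lemma \emph{decreases} the degree of one designated vertex $x_0$ by one (possibly raising the degree at one uncontrolled other vertex); it is not a tool for simultaneously raising many deficient $c_i$'s while respecting the cap $4$ and the other vertices' lower bounds, and the identity $\sum_i x_i=\binom{k}{2}$ is a necessary counting condition, not a feasibility proof. Two further unaddressed points: nothing in your construction distinguishes the ranges $i\leqslant s+t$ and $i\geqslant s+t+2$, so the stronger bound $5-x$ cannot emerge; and $M_0$ need not be simple, since $\widetilde G$ has parallel edges $c_iu_j$ (when $u_j$ is isolated in $Q_i$) and de Werra's balance on $C_i(u,v)$ places the two copies in two \emph{different} classes, both of which you may have selected.

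The paper resolves exactly these difficulties by preprocessing $\widetilde G$ before invoking de Werra: it duplicates every edge (so $\deg(c_i)=4k-4x_i$ and a \emph{single} colour class already carries $\geqslant\lfloor(4k-4x_i)/k\rfloor\geqslant 4-x_i$ edges at $c_i$, using $k\geqslant 4$), and it attaches an auxiliary vertex $u'$ to $c_{s+t+1},\dots,c_n$ by four parallel edges, which is precisely what boosts those vertices to $5-x$ and creates the slack that singles out $c_{s+t+1}$. With this setup the only repair ever needed is a single \emph{decrease} at $c_{s+t+1}$, which is exactly what Lemma~\ref{lem:reduction} provides; the split into $G_1,G_2$ is then done by Lemma~\ref{lem:hilton} followed by one more application of de Werra. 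To rescue your proposal you would need to replace the ``union of two classes plus repairs'' step by such a preprocessing (or by a genuine degree-constrained-subgraph/flow argument with all lower and upper bounds verified), and separately enforce simplicity of $M$.
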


\begin{proof}[Proof of Claim~\ref{claim:1}]
First, let $\widehat{G}$ be obtained from $\widetilde{G}$ by duplicating each edge of $\widetilde{G}$. Also, we add a new vertex $u'$ and join it to all vertices $c_{s+t+1},\ldots, c_n$ each one with 4 parallel edges. Therefore, for all $j\in [2s+r]$, $\deg_{\widehat{G}}(u_j)=2k$ and $\deg_{\widehat{G}}(u')\leqslant 2k-2$. Also, for each $i\in [n]$, we have $\deg_{\widehat{G}}(c_i)\leqslant 4k$.

By Lemma~\ref{lem:dewerra}, there is a balanced $k$-edge coloring $\{L_1,\ldots, L_k\}$ of $\widehat{G}$ with $k=2n-2s-r+1$ colors. Fix a color class $L_q$. 
Since $\deg_{\widehat{G}}(u_j)=2k$ and $\deg_{\widehat{G}}(c_i)\leqslant 4k$, we have $\deg_{{L}_q}(u_j)=2$ and $\deg_{{L}_q}(c_i)\leqslant 4$. Also, for each $i\in [s+t]$, if $\deg_{\widetilde{G}}(c_i)=2k-2x$, then $\deg_{{L}_q}(c_i)\geqslant \lfloor (4k-4x)/k\rfloor \geqslant 4-x$ (since $k\geqslant 4$ as $r\leqslant 2t-1$). Moreover, for each $i\in [s+t+1,n]$, if $\deg_{\widetilde{G}}(c_i)=2k-2x$, then $\deg_{{L}_q}(c_i)\geqslant \lfloor (4k-4x+4)/k\rfloor \geqslant 5-x$ (as $k\geqslant 4$ and $x\geqslant 1$). 

Since $\deg_{\widehat{G}}(u')\leqslant 2k-2$, there is a color class say $L_1$, such that  $u'$  is incident with exactly one edge in $L_1$. Without loss of generality, suppose that four parallel edges between $u'$ and  $c_{s+t+1}$ are in the color classes $L_1, L_2, L_3$ and $L_4$. Now, for each $q\in [4]$, let $\widehat{L}_q$ be the edge-set obtained from $L_q$ by removing all edges incident with $u'$.

  We know that $\deg_{\widehat{L}_q}(c_{s+t+1})\leqslant 3$. If $\deg_{\widehat{L}_1}(c_{s+t+1})=3$, then we apply Lemma~\ref{lem:reduction} by setting $G=\widehat{G}\setminus u'$, $A=\widehat{L}_1$, $B=\widehat{L}_2$, $x_0=c_{s+t+1}$ and $\eta=4$ to obtain a set $C$ and define $L'_1=C$. Also, if $\deg_{\widehat{L}_1}(c_{s+t+1})\leqslant 2$, then define $L'_1=\widehat{L}_1$. It is clear that $\deg_{L'_1}(c_{s+t+1})\leqslant 2$ and $ \deg_{\widehat{L}_1}(c_{i})\leqslant \deg_{L'_1}(c_{i})\leqslant 4$, for all $i\neq s+t+1$ and $ \deg_{L'_1}(u_{j})= \deg_{\widehat{L}_1}(u_{j}) $, for all $j$. With a similar argument, using the color classes $\widehat{L}_3$ and $\widehat{L}_4$, we can find a color class $L'_3$ such that $\deg_{L'_3}(c_{s+t+1})\leqslant 2$ and $ \deg_{\widehat{L}_3}(c_{i})\leqslant \deg_{L'_3}(c_{i})\leqslant 4$, for all $i\neq s+t+1$ and $ \deg_{L'_3}(u_{j})= \deg_{\widehat{L}_3}(u_{j}) $, for all $j$. Let $L'=L'_1\cup L'_3$.  Then, we have
  \begin{itemize}
  	\item for all $j\in [2s+r]$, $\deg_{L'}(u_j)=4$, 
  	\item for each $i\in [s+t]$, if $\deg_{\widetilde{G}}(c_i)=2k-2x$, for some $x\geqslant 0$, then $8-2x\leqslant \deg_{L'}(c_i)\leqslant 8$,
  	\item if $\deg_{\widetilde{G}}(c_{s+t+1})=2k-2x$,  for some $x\geqslant 1$, then $6-2x\leqslant \deg_{L'}(c_{s+t+1})\leqslant 4$,
  	\item for each $i\in [s+t+2,n]$, if $\deg_{\widetilde{G}}(c_i)=2k-2x$, for some $x\geqslant 1$, then  $10-2x\leqslant \deg_{L'}(c_{i})\leqslant 8$ except for possibly one $c_i$, say $c_{s+t+2}$, for which $9-2x\leqslant \deg_{L'}(c_{s+t+2})\leqslant 8$. 
  \end{itemize} 

Now, we apply Lemma~\ref{lem:hilton} for the graph $F=\widehat{G}[L']$ and the following pairing: if there are two parallel edges $e_1,e_2$ joining $c_i$ to $u_j$, then $e_1$ and $e_2$ form an $i$-pair. If $c_i$ is adjacent to $u_{j}$ and $u_{j'}$, where $u_j$ and $u_{j'}$ are the endpoints of one path in $Q_i$, then $c_iu_{j}$ and $c_iu_{j'}$ form an $i$-pair. Also, if there is more than one edge incident with $c_i$ still not paired off, then such edges are paired arbitrarily (one edge may possibly be remained unpaired). Therefore, there is a balanced two-coloring for $F$, called $E_1,E_2$, such that if $c_i$ is adjacent to some vertices $u_{j_{_1}}$ and $u_{j_{_2}}$ in $E_i$, $i=1,2$, then $u_{j_{_1}}$ and $u_{j_{_2}}$ are not the endpoints of a path in $Q_i$.
Now, since $\deg_{L'}(c_{s+t+2}) \geqslant 9-2x$, one of the sets $E_1,E_2$, say $E_1$, satisfies $\deg_{E_1}(c_{s+t+2})\geqslant 5-x$. Finally, applying Lemma~\ref{lem:dewerra} to the graph $\widehat{G}[E_1]$, we find a balanced two-edge-coloring $E_{11},E_{12}$. Setting $G_1=\widetilde{G}[E_{11}]$ and $G_2=\widetilde{G}[E_{12}]$, we obtain the desired subgraphs. 
\end{proof}

\begin{figure}[ht]
	\centering
	\resizebox{.7\textwidth}{!}{%
		\begin{tikzpicture}
		\tikzstyle{every node}=[font=\normalsize]
		\begin{scope}[xshift=160]
		\node at (6.5,13) {$G_2:$};
		\draw  (9.5,12.5) -- (8.8,13.5);
		\draw  (9.5,12.5) -- (10.2,13.5);
		\draw  (7.5,13.5) -- (7.5,12.5);
		\draw [ fill={black} , line width=0.2pt ] (7.5,13.5) circle (0.15cm);
		\draw [ fill={black} , line width=0.2pt ] (7.5,12.5) circle (0.15cm);
		\node [font=\small] at (7.5,12) {$c_{s+t+1}$};
		\node [font=\small] at (7.5,14) {$u_{j_{_6}}$};
		\node [font=\small] at (8.8,14) {$u_{j_{_3}}$};
		\node [font=\small] at (10.2,14) {$u_{j_{_4}}$};
		\node [font=\small] at (9.5,12) {$c_{i}$};
		\draw [ fill={black} , line width=0.2pt ] (9.5,12.5) circle (0.15cm);
		\draw [ fill={black} , line width=0.2pt ] (8.8,13.5) circle (0.15cm);
		\draw [ fill={black} , line width=0.2pt ] (10.2,13.5) circle (0.15cm);
		\end{scope}
		\node at (6.5,13) {$G_1:$};
		\draw  (9.5,12.5) -- (8.8,13.5);
		\draw  (9.5,12.5) -- (10.2,13.5);
		\draw  (7.5,13.5) -- (7.5,12.5);
		\draw [ fill={black} , line width=0.2pt ] (7.5,13.5) circle (0.15cm);
		\draw [ fill={black} , line width=0.2pt ] (7.5,12.5) circle (0.15cm);
		\node [font=\small] at (7.5,12) {$c_{s+t+1}$};
		\node [font=\small] at (7.5,14) {$u_{j_{_5}}$};
		\node [font=\small] at (8.8,14) {$u_{j_{_1}}$};
		\node [font=\small] at (10.2,14) {$u_{j_{_2}}$};
		\node [font=\small] at (9.5,12) {$c_{i}$};
		\draw [ fill={black} , line width=0.2pt ] (9.5,12.5) circle (0.15cm);
		\draw [ fill={black} , line width=0.2pt ] (8.8,13.5) circle (0.15cm);
		\draw [ fill={black} , line width=0.2pt ] (10.2,13.5) circle (0.15cm);
		\node [font=\small] at (7.5,10.65) {$u_{j_{_5}}$};
		\draw [, dashed] (8.75,9.25) ellipse (1.75cm and 2.25cm);
		\draw  (9.75,10.25) -- (7.5,10.25);
		\draw  (9.75,9.25) -- (7.5,9.25);
		\draw  (9.75,8.25) -- (7.5,8.25);
		\draw [ fill={black} , line width=0.2pt ] (9.75,10.25) circle (0.15cm);
		\draw [ fill={black} , line width=0.2pt ] (7.5,8.25) circle (0.15cm);
		\node [font=\small] at (7.5,9.65) {$u_{j_{_6}}$};
		\draw [ color={red}] (7.5,10.25) -- (6.25,10.25);
		\draw [ color={red}] (7.5,9.25) -- (6.25,9.25);
		\draw [ color={red}] (6.25,9.25) -- (6.25,10.25);
		\draw [ fill={red} , line width=0.2pt ] (6.25,10.25) circle (0.15cm);
		\draw [ fill={red} , line width=0.2pt ] (6.25,9.25) circle (0.15cm);
		\node [font=\small] at (6,10.6) {$u_{2s+r+1}$};
		\node [font=\small] at (6,8.9) {$u_{2s+r+2}$};
		\draw [ fill={black} , line width=0.2pt ] (7.5,9.25) circle (0.15cm);
		\draw [ fill={black} , line width=0.2pt ] (7.5,10.25) circle (0.15cm);
		\node [font=\normalsize] at (8.5,6.5) {$Q_{s+t+1}$};
		\draw [ fill={black} , line width=0.2pt ] (9.75,8.25) circle (0.15cm);
		\draw [ fill={black} , line width=0.2pt ] (9.75,9.25) circle (0.15cm);
		\begin{scope}[xshift=160]
		\node [font=\small] at (7.5,10.65) {$u_{j_{_2}}$};
		\draw [, dashed] (8.75,9.25) ellipse (1.75cm and 2.25cm);
		\draw  (9.75,10.25) -- (7.5,10.25);
		\draw (9.75,9.25) -- (7.5,9.25);
		\draw (9.75,8.25) -- (7.5,8.25);
		\draw [ fill={black} , line width=0.2pt ] (9.75,10.25) circle (0.15cm);
		\draw [ fill={black} , line width=0.2pt ] (7.5,8.25) circle (0.15cm);
		\node [font=\small] at (7.5,9.65) {$u_{j_{_1}}$};
		\node [font=\small] at (9.75,9.65) {$u_{j_{_3}}$};
		\node [font=\small] at (9.75,8.65) {$u_{j_{_4}}$};
		\draw [ color={red}] (7.5,10.25) -- (6.25,9.75);
		\draw [ color={red}] (7.5,9.25) -- (6.25,9.75);
		\draw [ fill={rgb,255:red,255; green,0; blue,0} , line width=0.2pt ] (6.25,9.75) circle (0.15cm);
		\node [font=\small] at (6,10.1) {$u_{2s+r+1}$};
		\draw [ fill={black} , line width=0.2pt ] (7.5,9.25) circle (0.15cm);
		\draw [ fill={black} , line width=0.2pt ] (7.5,10.25) circle (0.15cm);
		\draw [ color={red}] (9.75,9.25) -- (11,8.75);
		\draw [ color={red}] (9.75,8.25) -- (11,8.75);
		\draw [ fill={rgb,255:red,255; green,0; blue,0} , line width=0.2pt ] (11,8.75) circle (0.15cm);
		\node [font=\small] at (11,9.1) {$u_{2s+r+2}$};
		\node [font=\normalsize] at (8.5,6.5) {$Q_i$};
		\draw [ fill={black} , line width=0.2pt ] (9.75,8.25) circle (0.15cm);
		\draw [ fill={black} , line width=0.2pt ] (9.75,9.25) circle (0.15cm);
		\end{scope}
		\end{tikzpicture}
	}%
	\caption{\label{fig:extesion}Adding edges to the sets $Q_1,\ldots,Q_n$ using auxiliary graphs $G_1,G_2$.}
\end{figure}
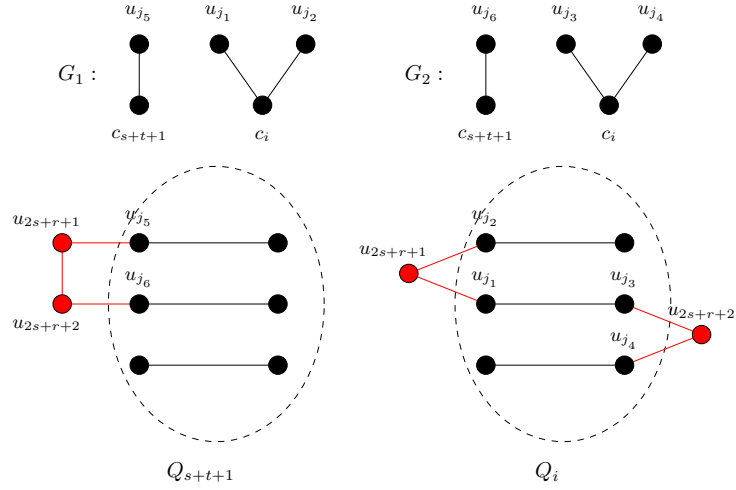

Now, we are ready to extend the decomposition $\mathcal{Q}$ of $K_{2s+r}$ to a decomposition of $K_{2s+r+2}$ by adding two new vertices $u_{2s+r+1}$ and $u_{2s+r+2}$ as follows (see Figure~\ref{fig:extesion}). For each $\ell=1,2$, if $u_j$ is adjacent to $c_i$ in $G_\ell$, then we add the edge $u_{2s+r+\ell} u_j$ to $Q_i$. Also, we add the edge $u_{2s+r+1}u_{2s+r+2}$ to $Q_{s+t+1}$.  
 So, by abuse of notation, we obtain a decomposition $\mathcal{Q}=(Q_1,\ldots,Q_n)$ for $K_{2s+r+2}$. It is clear that $G\cup (s+1)K_2$ is rainbow in $Q_1,\ldots, Q_{s+t+1}$. Moreover, because of Conditions \eqref{con2}, \eqref{con7} and \eqref{con8} of Claim~\ref{claim:1}, each $Q_i$ induces a linear forest in $K_{2s+r+2}$. Now, we check Condition \eqref{eq:Q}. First, for each $i\in [n]$, if $\deg_{\widetilde{G}}(c_i)=2k-2x$ for integer $x$, then, by \eqref{eq:ci} and Condition~\eqref{con-deg} of Claim~\ref{claim:1}, we have
 $$
 |Q_i|\geqslant
 \begin{cases}
  4s+2r-2n-1+x+4-x=4(s+1)+2r-2n-1& 1\leqslant i\leqslant s+t,\\
  4s+2r-2n-1+x+3-x+1=4(s+1)+2r-2n-1 & i=s+t+1,\\
  4s+2r-2n-1+x+5-x=4(s+1)+2r-2n & s+t+2\leqslant i\leqslant n.
 \end{cases}
 $$
 Note that when $i=s+t+1$, we have included the new edge $u_{s+t+1}u_{s+t+2}$ into $Q_{s+t+1}$ and so we have at least $4-x$ new edges therein. 
 Hence, inductively we can construct an edge decomposition $\mathcal{Q}=(Q_1,\ldots, Q_n)$ for $K_{2n-2t+r}$ such that 
 \begin{itemize}
 	\item for each $i\in [n]$, $Q_i$ induces a linear forest in $K_{2n-2t+r}$,
 	\item  $G\cup (n-t)K_2$ is rainbow in $Q_1,\ldots, Q_{n}$ and
 	\item for each $i\in [n]$, $|Q_i|\geqslant 2n-4t+2r-1$.  
 \end{itemize}
 Now, we apply Corollary~\ref{cor:hilton} to extend $\mathcal{Q}$ to a \hcd\ of $K_{2n+1}$ in which $G\cup (n-t)K_2$ is rainbow. This completes the proof.
 \end{proof}

Combining Theorems~\ref{thm:dense} and \ref{thm:k2} settles Conjecture~\ref{conj:main} as follows. Let $H$ be a minimal counterexample to Conjecture~\ref{conj:main}, where $H=H'\cup (n-t)K_2$ and $H'$ has no component isomorphic to $K_2$. Then, by Theorem~\ref{thm:dense}, we can construct an edge decomposition $\mathcal{P}$ with the desired properties and by Theorem~\ref{thm:k2}, we can extend it to a \hcd\ of $K_{2n+1}$ in which $H$ is rainbow. This contradicts the fact that $H$ is a counterexample. So, Conjecture~\ref{conj:main} is settled. 
%
%

\section{Concluding Remarks}
In this paper, we proved a conjecture by Wu asserting that for any subgraph $H$ of $K_{2n+1}$ with $n$ edges, there is a \hcd\ for $K_{2n+1}$ in which $H$ is rainbow. As it was mentioned in the introduction, this result is, in some point of view, a complete graph counterpart of Evans conjecture. In other words, the conjecture asserts that if the edges of $H$ are colored by $n$ different colors, then we can extend such coloring to a coloring of $K_{2n+1}$ such that each color class induces a Hamiltonian cycle.  So, one may ask if its generalization to an arbitrary coloring of $H$ is correct. We state this assertion as the following conjecture.

\begin{conj} \label{conj:evans}
Let $H$ be a subgraph of $K_{2n+1}$ with $n$ edges which are colored such that each color class induces a linear forest. Then, the coloring can be extended to a coloring of $K_{2n+1}$ with $n$ colors such that each color class induces a Hamiltonian cycle. 	
\end{conj}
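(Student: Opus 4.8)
The plan is to realize the extension by Hilton's vertex-insertion method, adding the $2(n-t)$ missing vertices of $K_{2n+1}$ in pairs and finishing with Corollary~\ref{cor:hilton}. Concretely, I would prove by induction on $s\in[0,n-t]$ that $\mathcal{P}$ extends to an edge decomposition $\mathcal{Q}=(Q_1,\dots,Q_n)$ of $K_{2s+r}$ in which each $Q_i$ induces a linear forest, $G\cup sK_2$ is rainbow in $Q_1,\dots,Q_{t+s}$, and $|Q_i|\geqslant 4s+2r-2n-1$ for $i\in[t+s]$ while $|Q_i|\geqslant 4s+2r-2n$ for $i\in[t+s+1,n]$. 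The base case $s=0$ is exactly the hypothesis. Once $s=n-t$ is reached, every class of the resulting decomposition of $K_{2n-2t+r}$ induces a linear forest, $G\cup(n-t)K_2$ is rainbow in all $n$ classes, and $|Q_i|\geqslant 2n-4t+2r-1=2(2n-2t+r)-2n-1$, so Corollary~\ref{cor:hilton} applies. Here the hypothesis $G\not\cong tK_2$ is what guarantees $r\leqslant 2t-1$, hence $2n-2t+r\leqslant 2n$ lies in the admissible range, and the corollary extends $\mathcal{Q}$ to a \hcd\ of $K_{2n+1}$ with the desired rainbow property.

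For the inductive step $s\to s+1$ I would add two new vertices, join each to all $2s+r$ old vertices, and place the single new edge between the two new vertices into class $Q_{s+t+1}$ — this is the $(s+1)$-st copy of $K_2$ and keeps $G\cup(s+1)K_2$ rainbow. The binding constraint is that each class must remain a linear forest: a new vertex may be attached to an old vertex $u_j$ inside $Q_i$ only when $u_j$ is currently a path endpoint or isolated in $Q_i$, and the two new vertices must not be joined to the two endpoints of one path of $Q_i$. To organize the choices I would build an auxiliary bipartite multigraph $\widetilde{G}$ on parts $\{u_1,\dots,u_{2s+r}\}$ and $\{c_1,\dots,c_n\}$, where $c_iu_j$ is a single edge when $u_j$ is a path endpoint in $Q_i$ and a double edge when $u_j$ is isolated in $Q_i$; a direct degree count gives $\deg_{\widetilde{G}}(u_j)=k:=2n-2s-r+1$ and $\deg_{\widetilde{G}}(c_i)=4s+2r-2|Q_i|$. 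The step then reduces to selecting two disjoint subgraphs $G_1,G_2\subseteq\widetilde{G}$, one per new vertex, with $\deg_{G_\ell}(u_j)=1$ for every $j$ (each new vertex attaches to each old vertex exactly once), with the $c_i$-degrees bounded so as to preserve the linear-forest and rainbow structure, and with the degree sums $\deg_{G_1}(c_i)+\deg_{G_2}(c_i)$ large enough — roughly $4-x$, $3-x$, $5-x$ according to the type of $i$, where $x$ records the current slack $|Q_i|-(4s+2r-2n-1)$ — to push every size past its new threshold.

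To produce $G_1,G_2$ I would double $\widetilde{G}$ and attach an auxiliary vertex to the deficient classes so that all old vertices acquire degree $2k$, then apply de Werra's balanced coloring (Lemma~\ref{lem:dewerra}) with $k$ colors; balancedness forces $\deg(u_j)=2$ in each color and yields the floor lower bounds on the class degrees that feed the size invariant. After isolating a single color in which the auxiliary vertex has degree one and repairing the tight class $c_{s+t+1}$ with the reduction lemma (Lemma~\ref{lem:reduction}), I would merge two colors into a subgraph of $u_j$-degree $4$ and invoke Hilton's pairing lemma (Lemma~\ref{lem:hilton}) with the pairing that matches parallel edges and matches the two edges running to the endpoints of a common path; the resulting balanced two-coloring then respects exactly the forbidden configurations, never attaching a new vertex to both endpoints of a path nor closing a cycle. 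A final application of Lemma~\ref{lem:dewerra} halves one color class to achieve $\deg(u_j)=1$, producing $G_1$ and $G_2$.

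The main obstacle is this last selection, where I must simultaneously (i) meet the lower bounds on $\deg_{G_1}(c_i)+\deg_{G_2}(c_i)$ that drive the size invariant, (ii) honour the structural constraints that keep each class a linear forest and $G\cup(s+1)K_2$ rainbow, and (iii) give the special class $Q_{s+t+1}$ — which receives the new $K_2$ and hence carries the tightest degree budget — its own separate treatment. Reconciling the quantitative floor estimates from the balanced colorings with the combinatorial pairing constraints of Lemma~\ref{lem:hilton}, and verifying that $k\geqslant 4$ throughout (which again uses $r\leqslant 2t-1$, since the step is only performed for $s\leqslant n-t-1$), is where essentially all of the careful bookkeeping lives; the remaining verifications are routine degree arithmetic.
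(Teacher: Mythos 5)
Your proposal does not prove the statement at hand. What you have written is, almost verbatim, the paper's proof of Theorem~\ref{thm:k2} together with its combination with Theorem~\ref{thm:dense}, i.e.\ a proof of Conjecture~\ref{conj:main}, the \emph{rainbow} case in which the $n$ edges of $H$ all receive distinct colors. The statement you were asked to prove is Conjecture~\ref{conj:evans}, where the $n$ edges of $H$ carry an \emph{arbitrary} coloring whose color classes are linear forests; the paper states this explicitly as an open problem (it is a strictly more general assertion, of which Conjecture~\ref{conj:main} is the special case of $n$ singleton color classes), and your argument never engages with that extra generality. Concretely: your hypotheses --- a decomposition $\mathcal{P}=(P_1,\dots,P_n)$ of $K_r$ with $G$ rainbow in $P_1,\dots,P_t$ and the size bounds $|P_i|\geqslant 2r-2n-1$ resp.\ $2r-2n$ --- are not given by Conjecture~\ref{conj:evans}; they are manufactured by Theorem~\ref{thm:dense}, whose proof runs an induction on a minimal counterexample to Conjecture~\ref{conj:main} (it applies the conjecture to a subgraph $H''$ with $\lceil r/2\rceil$ edges). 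No analogous base is available for the colored version, so the starting point of your construction simply does not exist in this setting.

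The inductive step also breaks. You insert the $2(n-t)$ missing vertices in pairs and place each new edge $u_{2s+r+1}u_{2s+r+2}$ into a \emph{fresh} class $Q_{s+t+1}$, treating it as ``the $(s+1)$-st copy of $K_2$''; this presupposes that every $K_2$ component of $H$ occupies its own color class. In Conjecture~\ref{conj:evans} a single color class may contain several edges --- several $K_2$ components, or a $K_2$ together with edges of larger components --- and then both the rainbow bookkeeping and the size invariant $|Q_i|\geqslant 4s+2r-2n-1$ vs.\ $4s+2r-2n$ (whose distinction between $i\leqslant t+s$ and $i>t+s$ encodes exactly ``one prescribed edge per class so far'') lose their meaning. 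Likewise, your use of $G\not\cong tK_2$ to get $r\leqslant 2t-1$ and hence $k\geqslant 4$ is a fact about the rainbow reduction, not about the general colored instance. To address Conjecture~\ref{conj:evans} you would need a genuinely new idea for handling color classes that already contain several prescribed edges distributed over many components; the paper offers none, and neither does your proposal.
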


Note that Conjecture~\ref{conj:main} is a special case of Conjecture~\ref{conj:evans} when the edges of $H$ are colored with $n$ different colors. Also, it is noteworthy that in Conjecture~\ref{conj:evans}, the number $n$ for the number of edges of $H$ is optimal, because if we  consider $H=P_2\cup P_3$ as a subgraph of $K_5$ and color the edges of $P_3$ with color $1$ and the edge of $P_2$ with color $2$, then such a coloring cannot be extended to a \hcd\ of $K_5$.

\vspace*{1cm}
\appendix
\section{Proof of Lemma~\ref{lem:hilton}} \label{sec:app}
Here we give a proof of Lemma~\ref{lem:hilton}. Let $F$ be a bipartite multigraph with bipartition $(X,Y)$ such that $e(F)=e$ and the degree of each vertex in $Y$ is even. Also, consider a pairing of $F$ on $X$ and extend it as follows. If there are two edges incident with a vertex $x\in X$ which are unpaired, then form a $x$-pair and do this until there is at most one unpaired edge on each vertex of $X$.
 
We proceed by induction on $e$. There is nothing to prove for $e=2$. So, assume that $e>2$. Now, we claim that there is an even cycle or an even path (a path with even edges) in $F$, say $Q$, 
satisfying the following properties:
\begin{enumerate}
\item if $Q$ is a path, its endpoints are in $X$, 
\item for every vertex $x\in V(Q)\cap X$, if $\deg_Q(x)=2$, then the edges of $Q$ incident with $x$ form an $x$-pair,
\item there is a balanced 2-edge coloring on $Q$ with the color classes  $ E_1 $ and $ E_2 $ such that the edges of $Q$ are alternately in $ E_1 $ and $ E_2 $.
\end{enumerate}
In order to prove the claim, select an arbitrary edge $e_0 = x_{i_0}y_{j_0}$ and place it in $ E_1 $.  If $ e_0 $ has an $ x_{i_0} $-pair, say $  e_1 = x_{i_0}y_{j_1} $, place $ e_1 $ in $ E_2 $. Since the degree of $ y_{j_1} $ is even, it has still at least one unassigned edge, let it be $ e_2 = x_{i_1}y_{j_1} $ and place $ e_2 $ in 
$ E_1 $. Continue in this way until one of the following occurs.
\begin{itemize}
\item 
An edge $ e_{2s+1} =x_{i_s}y_{j_{s+1}} $ is found (and placed in $ E_2 $), where 
$ y_{j_{s+1}}=y_{j_p} $ for some $  0 \leqslant p \leqslant s $. In this case we have an even cycle and the claim is proved.
\item  An edge $ e_{2s} =x_{i_s}y_{j_s} $ is found (and placed in $E_1$) and $ e_{2s} $ has no $x_{i_s} $-pair; then let $ x_{i_s} $ be the endpoint of this path. Now, there  is at least one unassigned edge on $ y_{j_0} $.
Let this edge be $ e_{-1} = x_{i_{-1}}y_{j_0} $ and place it in $ E_2 $. If $ e_{-1} $ has no $ x_{i_{-1}} $-pair, let $ x_{i_{-1}} $ be the other endpoint of the path $Q$. If $ e_{-1} $ has an $ x_{i_{-1}} $-pair, let it be $ e_{-2} = x_{i_{-1}}y_{j_{-1}} $ and place it in $E_1$. The vertex $ y_{j_{-1}} $ still has an 
unassigned edge on it, say $ e_{-3} = x_{i_{-2}}y_{j_{-1}} $. Place $e_{-3}$ in $ E_2 $. Continue in this way until an edge $ e_{-(2r+ 1)}=x_{i_{-r-1}}y_{j_{-r}}$ is found (and placed in $E_2$) such that the edge $ e_{-(2r+ 1)} $ has no $ x_{i_{-r-1}} $-pair. Then, let $ x_{i_{-r-1}} $ be the other endpoint of the path constructed.
\end{itemize}

Let $F'$ be the spanning subgraph of $F$ obtained by deleting the edges of $Q$. By the induction hypothesis, $F'$ has a balanced $2$-edge coloring with the color classes $E_1'$ and $E_2'$ satisfying the assertion. Clearly, the edge coloring on $F$ with the color classes $E_1''$ and $E_2''$ with $E_1''=E_1\cup E_1'$ and $E_2''=E_2\cup E_2'$ is the desired balanced $2$-edge coloring on $F$. 
\end{document}